\newtheorem{theorem}{Theorem}[section]
\newtheorem{lemma}{Lemma}[section]
\newcommand{\N}{\mathbb{N}}
\newcommand{\Z}{\mathbb{Z}}
\newcommand{\R}{\mathbb{R}}
\newcommand{\C}{\mathbb{C}}
\newcommand{\grad}{\nabla}
\newcommand{\ov}{\overline}
\begin{document}
\lhead{}
\rhead{}

\begin{flushleft}
\Large 
\noindent{\bf \Large Approximation of the zero-index transmission eigenvalues with a conductive boundary and parameter estimation}
\end{flushleft}

\vspace{0.2in}

{\bf  \large Isaac Harris}\\
\indent {\small Department of Mathematics, Purdue University, West Lafayette, IN 47907 }\\
\indent {\small Email: \texttt{harri814@purdue.edu}}

\vspace{0.2in}

\begin{abstract}
\noindent In this paper, we present a spectral-Galerkin method to approximate the zero-index transmission eigenvalues with a conductive boundary condition. This is a new eigenvalue problem derived from the scalar inverse scattering problem for an isotropic media with a conductive boundary condition. In our analysis, we will consider the equivalent fourth-order eigenvalue problem where we establish the convergence when the approximation space is the span of finitely many Dirichlet eigenfunctions for the Laplacian. We establish the convergence rate of the spectral approximation by appealing to Weyl's law. Numerical examples for computing the eigenvalues and eigenfunctions for the unit disk and unit square are presented. Lastly, we provide a method for estimating the refractive index assuming the conductivity parameter is either sufficiently large or small but otherwise unknown. 
\end{abstract}

\vspace{0.1in}

\noindent {\bf Keywords}:  Transmission Eigenvalues $\cdot$ Inverse Spectral Problem $\cdot$ Spectral-Galerkin Method $\cdot$ Error Estimates \\

\noindent {\bf AMS subject classification}: 35P25 $\cdot$ 35J30 $\cdot$ 65N30 $\cdot$ 65N15

\section{Introduction}

In this paper, we consider the numerical approximation of the zero-index transmission eigenvalues that are associated with the scalar scattering problem with a conductive boundary. In general, the transmission eigenvalues can be seen as the wave numbers where the associated far-field {\color{black}operator fails to be} injective. The zero-index transmission eigenvalue problem is derived by mathematically imbedding the scattering object in a background with refractive index equalling zero in the interior of the scatterer. It can be shown that the resulting far-field operator fails to be injective with a dense range at the wave numbers corresponding to these eigenvalues see \cite{zi-te} for the case when the conductivity is zero. The zero-index transmission eigenvalue problem has two main advantages over the classical transmission eigenvalue problem. First, is that they avoid the assumption that the contrast must be either positive or negative definite in the scatterer. Next, is the fact that they are linear  eigenvalue problems. This zero-index eigenvalue problem with a conductive boundary condition was introduced in \cite{two-eig-cbc} and was motivated by the work in \cite{te-cbc,te-cbc2} for the classical transmission eigenvalue problem with a conductive boundary and \cite{zi-te} for the  scattering problem without a conductive boundary.  We are also interested in the inverse spectral problem of estimating the refractive index with little a prior knowledge of the boundary conductivity parameter. There have been manuscripts written on the computation and application of transmission eigenvalue problems to parameter identification such as \cite{LSM-maxwell-book,TE-book,harris-thesis,eig-FEM-book} to name a few. For the classical transmission eigenvalue problem we refer to \cite{spectraltev1,spectraltev2}  for the application of spectral-Galerkin methods to compute the eigenvalues. See for {\color{black}example} \cite{fem-te,fem-te2} for some of the previous work for computing the classical transmission eigenvalues via the finite element method. Recently, the method of fundamental solutions for computing the classical transmission eigenvalues was studied and implemented in \cite{mfs-te}. Due to the monotonicity property of the transmission eigenvalues one can estimate the refractive index from the knowledge of the eigenvalues(see for {\color{black}example} \cite{te-homog,cgtev}). The main contributions of this paper is the convergence analysis with error estimates of the spectral-Galerkin method with the Dirichlet eigenfunctions taken as the basis and the estimation of the refractive index from the zero-index transmission eigenvalues.

The zero-index transmission eigenvalue problem can be written as a {\color{black} fourth-}order eigenvalue problem that depends on the refractive index and conductivity. We now derive the {\color{black} fourth-}order formulation of the eigenvalue problem. To this end, we define the zero-index transmission eigenvalue problem from the scalar isotropic scattering problem as the values $k \in \C\setminus \{ 0 \}$ such that there exists a nontrivial pair $(u,{u_0} ) \in H^1(D) \times H^1(D)$ satisfying the system 
\begin{align}
\Delta u +k^2 n u=0 \quad \text{and} \quad \Delta {u_0} =0  \quad &\textrm{ in } \,  D \label{zi-teprob1} \\
 u-{u_0} =0  \quad  \text{and} \quad {\partial_\nu {u}} = {\partial_\nu {u_0}}+ \eta u_0   \quad &\textrm{ on } \partial D.  \label{zi-teprob2} 
\end{align} 
Here, we assume that $D \subset \R^d$ (for $d=2,3$) is a simply connected open set {\color{black} where the boundary $\partial D$ is either polygonal with no reentrant corners or class $\mathscr{C}^2$} where $\nu$ is the outward unit normal vector. The eigenvalue $k$ corresponds to the wave number for the associated scattering problem. Let the refractive index $n \in L^{\infty} (D)$ and conductivity $\eta \in L^{\infty}(\partial D)$ where we assume that they are uniformly positive definite functions such that there exists positive constants
$$ n_{\text{min}}  \leq n(x) \leq n_{\text{max}}  \quad \text{ a.e.} \, \, \,\,x \in \ov{D} \quad \text{ and } \quad  \eta_{\text{min}} \leq \eta(x)\leq \eta_{\text{max}}\quad \text{ a.e.} \, \, \,\, x \in \partial D.$$
Therefore, we can define the difference of the eigenfunctions $w=u-{u_0}$. It is clear that $w$ satisfies the equation  
$$\Delta w +k^2 n w= -k^2 n {u_0} \quad \textrm{ in } \,  D.$$
Due to standard elliptic regularity results (\cite{evans} page 334 for a $\mathscr{C}^2$ boundary) we have that $w \in H^2(D) \cap H^1_0(D)$. Now, by appealing to the fact that ${u_0}$ is harmonic in $D$ and the boundary condition \eqref{zi-teprob2} we can conclude that $w$ satisfies the homogeneous boundary value problem 
\begin{align}
\Delta \frac{1}{n} \Delta w = -k^2 \Delta w \quad \textrm{ in } \,  D \quad \text{and} \quad  \frac{k^2}{\eta} {\partial_\nu w}= -  \frac{1}{n}\Delta w   \quad \textrm{ on } \,  \partial D. \label{zi-teprob3} 
\end{align} 
In \cite{two-eig-cbc} it is shown that $k \in \C\setminus \{ 0 \}$ is a zero-index transmission eigenvalue problem if and only if there is a nontrivial $w \in H^2(D) \cap H^1_0(D)$ satisfying \eqref{zi-teprob3}. By studying the variational formulation of \eqref{zi-teprob3} it is shown that there exists infinitely many real zero-index transmission eigenvalues. {\color{black} This eigenvalue problem is derived by mathematically embedding scatterer $D$ in a background where the refractive index is equal to zero in the interior of the object. This is done by studying the difference of the far-field operators for the standard scattering problem and the augmented far-field operator for the scattering problem where the refractive index is equal zero in $D$.} In general, it is known that the transmission eigenvalues can be determined from the scattering data. In \cite{armin} it is shown that the classical transmission eigenvalues can be determined from the far-field data. While in \cite{te-cbc2} it is shown that the classical transmission eigenvalues with a conductive boundary can also be recovered from far-field data. This implies that these eigenvalues can be used as a target signature to determine the material properties. 

The rest of the paper is ordered as follows. In the next section we will study the solution operator corresponding to the zero-index transmission eigenvalue problem with a conductive boundary \eqref{zi-teprob3}. We will then consider the approximation of the eigenvalues via a Dirichlet spectral-Galerkin method where the approximation space is taken to be the span of finitely many Dirichlet eigenfunctions for the Laplacian. {\color{black} This method of representing the solution to a PDE by the eigenfunctions of an auxiliary eigenvalue problem is studied for physical applications in the manuscript \cite{physics}. }We study the approximation properties of this space as well as prove convergence of the Dirichlet spectral-Galerkin method for computing the zero-index transmission eigenvalues and provide error estimates. We will then provide some numerical examples in two dimensions to show that the proposed spectral method is effective for computing the eigenvalues. Once we have a method to approximate the eigenvalues we will turn our attention to estimating the refractive index for either large or small valued conductivity parameters.

\section{The Zero-Index Transmission Eigenvalues}\label{problem-statement}
This section focuses on the variational formulation of the zero-index transmission eigenvalue problem \eqref{zi-teprob3}. In particular, we study the associated solution operator. The analysis of the solution operator will be used in the convergence analysis of the spectral method. We define the variational space for \eqref{zi-teprob3} as $H^2(D) \cap H^1_0(D)$  where 
$$H^2(D) =\big\{ \varphi \in L^2(D) \, : \, \partial_{x_i} \varphi \quad  \text{and} \quad \partial_{x_i x_j} \varphi \in L^2(D) \, \text{ for } \, i,j =1, \cdots, d \big\} $$
and 
$$H^1_0(D) =\big\{ \varphi \in L^2(D) \, : \, \partial_{x_i} \varphi  \in L^2(D) \,\,  \text{ for } \,\,  i =1, \cdots , d \,\, \text{ with } \,\, \varphi|_{\partial D}=0 \big\}.$$
From \cite{two-eig-cbc} we have that the equivalent variational form for the  zero-index transmission eigenvalue problem \eqref{zi-teprob3} is given by the values $k \in \C$ such that there is a nontrivial $w \in H^2(D) \cap H^1_0(D)$ satisfying
\begin{align}
a(w,\varphi)=k^2b(w,\varphi)  \quad \text{ for all } \,\,\, \varphi \in H^2(D) \cap H^1_0(D). \label{zi-varform} 
\end{align} 
We will assume that the eigenfunctions are normalized with $\| w \|_{L^2(D)}=1$. 
The bounded sesquilinear forms on are defined by 
\begin{align}
a(w, \varphi)= \int\limits_{D} \frac{1}{n} \Delta w \, \Delta \overline{\varphi}\, \text{d}x \quad \text{and } \quad 
b( w, \varphi)=\int\limits_{D} \grad w \cdot \grad \overline{\varphi} \, \text{d}x -  \int\limits_{\partial D} \frac{1}{\eta} {\partial_\nu w} \, {\partial_\nu  \overline{\varphi}} \, \text{d}s. \label{forms}
\end{align} 
Recall, that we assume that there exists positive constants
$$ n_{\text{min}}  \leq n(x) \leq n_{\text{max}}  \quad \text{ a.e.} \, \, \,\,x \in \ov{D} \quad \text{ and } \quad  \eta_{\text{min}} \leq \eta(x)\leq \eta_{\text{max}}\quad \text{ a.e.} \, \, \,\, x \in \partial D.$$
We will study the variational formulation for the  zero-index transmission eigenvalue problem in this section. Even though this is a linear eigenvalue problem for $k^2$ notice that the sesquilinear form $b(\cdot \, , \cdot)$ is not sign definite due to the opposing signs in the definition. Which does not give a semi-norm on the variational space which is usually the case for standard elliptic eigenvalue problems.

The well-posedness estimate for the Poisson problem with zero trace along with the $H^2$ elliptic regularity estimate gives have that for any $H^2(D)$ function with zero trace $\|\Delta \cdot \|_{L^2(D)}$ is equivalent to the $\| \cdot \|_{H^2(D)}$. Therefore, we let 
$$X(D)=H^2(D) \cap H^1_0(D) \quad \text{ such that } \quad \| \cdot \|_{X(D)} = \|\Delta \cdot \|_{L^2(D)}.$$
Clearly, $X(D)$ is a Hilbert space with the associated inner-product. This implies that $a(\cdot \, , \cdot)$ is a coercive and Hermitian sesquilinear form on $X(D)$. This implies that $k=0$ is not a zero-index transmission eigenvalue. Now, by the Lax-Milgram Lemma we can define the solution operator $T: X(D) \to X(D)$ as 
\begin{align}
a\big(Tf,\varphi \big)=b(f,\varphi)  \quad \text{ for all } \,\,\, f, \varphi \in X(D). \label{def-T}
\end{align} 
From the definition of $T$ in \eqref{def-T} we have the following result. 

\begin{theorem}\label{T-compact}
Let the operator $T: X(D) \to X(D)$ be as defined by \eqref{def-T}. Then $T$ is an $a(\cdot \, , \cdot)$ self-adjoint compact operator and satisfies the estimate
$$\| Tf \|_{X(D)} \leq C \Big( \| f \|_{H^1(D)} + \|  {\partial_\nu f}  \| _{L^2(\partial D)} \Big).$$
\end{theorem}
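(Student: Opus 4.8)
The plan is to establish the three claims in their natural order: first the $a(\cdot\,,\cdot)$ self-adjointness, which reduces to the Hermitian symmetry of $b(\cdot\,,\cdot)$; then the norm estimate, which comes from testing the definition of $T$ against $Tf$ itself and invoking coercivity of $a(\cdot\,,\cdot)$; and finally compactness, which I would deduce from the estimate together with two compact embeddings. The self-adjointness and the estimate are essentially algebraic once the right quantities are identified, so the real work is concentrated in the compactness argument.

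For self-adjointness, I would first observe that because $\eta$ is real-valued, the form $b(\cdot\,,\cdot)$ is Hermitian, i.e. $b(f,g)=\overline{b(g,f)}$. Since $a(\cdot\,,\cdot)$ is Hermitian by construction, the defining relation $a(Tf,\varphi)=b(f,\varphi)$ then gives $a(Tf,g)=b(f,g)=\overline{b(g,f)}=\overline{a(Tg,f)}=a(f,Tg)$ for all $f,g\in X(D)$, which is exactly self-adjointness with respect to the inner product $a(\cdot\,,\cdot)$.

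For the estimate, I would set $\varphi=Tf$ in \eqref{def-T} and use coercivity of $a(\cdot\,,\cdot)$ on $X(D)$, namely $a(Tf,Tf)\geq n_{\text{max}}^{-1}\|Tf\|_{X(D)}^2$. On the other side $a(Tf,Tf)=b(f,Tf)$, and a Cauchy--Schwarz bound gives $|b(f,Tf)|\leq \|\nabla f\|_{L^2(D)}\|\nabla Tf\|_{L^2(D)}+\eta_{\text{min}}^{-1}\|\partial_\nu f\|_{L^2(\partial D)}\|\partial_\nu Tf\|_{L^2(\partial D)}$. The factors carrying $Tf$ are then dominated by $\|Tf\|_{X(D)}$: the gradient term through the equivalence of $\|\Delta\cdot\|_{L^2(D)}$ with $\|\cdot\|_{H^2(D)}$ on $X(D)$ established above, and the normal-derivative term through the trace inequality $\|\partial_\nu Tf\|_{L^2(\partial D)}\leq C\|Tf\|_{H^2(D)}$. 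Combining the two sides and dividing by $\|Tf\|_{X(D)}$ yields the stated bound with a constant $C$ depending only on $n_{\text{max}}$, $\eta_{\text{min}}$, and $D$.

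Finally, for compactness I would exploit the estimate together with the compact embedding $X(D)\hookrightarrow H^1(D)$ (Rellich--Kondrachov) and the compactness of the normal-derivative trace $f\mapsto\partial_\nu f$ from $X(D)$ into $L^2(\partial D)$; the latter holds because $\partial_\nu f\in H^{1/2}(\partial D)$ for $f\in H^2(D)$ and $H^{1/2}(\partial D)\hookrightarrow L^2(\partial D)$ compactly. Given a bounded sequence $\{f_m\}$ in $X(D)$, these two compact maps furnish a single subsequence along which $\{f_m\}$ is Cauchy in $H^1(D)$ and $\{\partial_\nu f_m\}$ is Cauchy in $L^2(\partial D)$ simultaneously; applying the estimate to the linear operator $T$ and to the differences $f_m-f_\ell$ shows that $\{Tf_m\}$ is Cauchy in $X(D)$, hence convergent, so $T$ is compact. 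I expect the main obstacle to be justifying the compactness of the normal-derivative trace map cleanly, since this is precisely where the regularity hypothesis on $\partial D$ (class $\mathscr{C}^2$, or polygonal with no reentrant corners) must be used to guarantee both the $H^2$ trace theory and the compact embedding $H^{1/2}(\partial D)\hookrightarrow L^2(\partial D)$.
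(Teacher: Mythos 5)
Your proposal is correct and follows essentially the same route as the paper: Hermitian symmetry of $b(\cdot\,,\cdot)$ for the $a(\cdot\,,\cdot)$ self-adjointness, coercivity of $a(\cdot\,,\cdot)$ plus Cauchy--Schwarz and the trace theorem for the estimate, and the compact embeddings $H^{2}(D)\hookrightarrow H^{1}(D)$ and $H^{1/2}(\partial D)\hookrightarrow L^{2}(\partial D)$ for compactness. The only difference is presentational: the paper asserts that compactness ``follows immediately from the estimate'' given these embeddings, whereas you spell out the subsequence extraction explicitly, which is a welcome bit of added rigor rather than a different argument.
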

\begin{proof}
Since $a(\cdot \, , \cdot)$ is a coercive and Hermitian sesquilinear form on $X(D)$ it is an equivalent inner-product on $X(D)$. Therefore, we have that for all $f, \varphi \in X(D)$
\begin{align*}
a\big(Tf,\varphi\big)=b(f,\varphi)=\ov{b(\varphi,f)}= \ov{a\big(T\varphi,f\big)}=a\big(f,T\varphi\big)
\end{align*} 
since $n$ and $\eta$ are real-valued. Proving that $T$ is $a(\cdot \, , \cdot)$ self-adjoint on $X(D)$. 
By the compact embedding of $H^{1/2}(\partial D)$ into $L^2(\partial D)$ and $H^{2}(D)$ into $H^1( D)$ the compactness of $T$ will follow immediately from the estimate. To prove the estimate notice that by \eqref{forms} and \eqref{def-T} we can conclude that 
\begin{align*}
\frac{1}{n_{\text{max}}} \| Tf \|^2_{X(D)} &\leq a\big(Tf,Tf\big)=b\big(f,Tf\big)  \\
							    &\leq  \Big( \| f \|_{H^1(D)} \| Tf \|_{H^1(D)} + \frac{1}{\eta_{\text{min}}} \|  {\partial_\nu f}  \| _{L^2(\partial D)} \|  {\partial_\nu Tf}  \| _{L^2(\partial D)} \Big)
\end{align*} 
where we have used the bounds on the coefficients. By appealing to the Trace Theorem and the continuous embedding of $H^{2}(D)$ into $H^1( D)$ we further have that 
$$\frac{1}{n_{\text{max}}} \| Tf \|^2_{X(D)} \leq C \Big( \| f \|_{H^1(D)} + \|  {\partial_\nu f}  \| _{L^2(\partial D)} \Big)\| Tf \|_{X(D)} $$
proving the claim. 
\end{proof}

Notice that since $T$ is a self-adjoint operator on the Hilbert space $X(D)$ with the $a(\cdot \, , \cdot)$ inner-product the Hilbert-Schmidt Theorem implies that there exists infinitely many eigenvalues counting multiplicity $\mu \in \R$ for the operator $T$ such that 
$$Tw = \mu w \quad \text{ which implies that } \quad \mu =k^{-2}.$$
Note that since $T$ is not sign definite there can be complex transmission eigenvalues $k$ that are purely imaginary. In \cite{two-eig-cbc} it has been shown that there are infinitely many zero-index transmission eigenvalues $k \in \R$.  Also, note that again by the Hilbert-Schmidt Theorem we have that there are infinity many eigenfunctions $w$ that form an $a(\cdot \, , \cdot)$ orthonormal basis of $X(D)$.
\section{The Dirichlet Spectral-Galerkin Approximation}\label{conv-analysis} 
With the results given in the previous section we can prove the convergence and error estimates of the Dirichlet spectral-Galerkin approximation method of the zero-index transmission eigenvalue problem.  We will use the approximation space that is the span of finitely many Dirichlet eigenfunctions for the Laplacian in the domain $D$. To prove the convergence and error estimates we must show the approximation properties of this space and use the variational formulation \eqref{zi-varform} to show the convergence of the eigenvalues and eigenfunctions. Even though we focus on the approximation space of Dirichlet eigenfunctions similar analysis as in Section \ref{conv-eig} will work for any conforming approximation space such as the Legendre-Galerkin approximation which is used for the fourth order formulation of the classical transmission eigenvalue problem in \cite{spectraltev2}. 

 \subsection{Approximation Space }\label{approx}
 Here we will define the approximation space of Dirichlet eigenfunctions and study the approximation properties of the space. To begin, we let $\phi_j \in H^1_0(D)$ be the $j$th Dirichlet eigenfunction for the Laplacian and the corresponding eigenvalue $\lambda_j \in \R_{+}$ for the domain $D$. The Dirichlet eigenpair satisfy 
\begin{align}
- \Delta \phi_j = \lambda_j \phi_j \,\, \text{ in } \,\, D \quad \text{ where }  \quad  \| \phi_j\|_{L^2(D)}=1. \label{dirichlet-eig}
\end{align}
By again appealing to elliptic regularity we have that $\phi_j \in X(D)$. From \cite{two-eig-cbc} we have the following result. 
\begin{lemma} \label{dense-eig}
Let $\phi_j$ satisfy \eqref{dirichlet-eig}  then the span$\{ \phi_j \}_{j=1}^\infty$ is dense in $X(D)$. 
\end{lemma}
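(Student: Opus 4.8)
The plan is to exploit the fact that the Dirichlet eigenfunctions $\{\phi_j\}_{j=1}^\infty$ form a complete orthonormal basis of $L^2(D)$, and to transfer this completeness from the $L^2$ topology to the $X(D)$ topology using the identity $\|\cdot\|_{X(D)} = \|\Delta\cdot\|_{L^2(D)}$ together with the eigenrelation $-\Delta\phi_j = \lambda_j\phi_j$. Concretely, I would fix an arbitrary $w \in X(D)$, write its $L^2$ expansion $w = \sum_j c_j\phi_j$ with $c_j = (w,\phi_j)_{L^2(D)}$, and show that the partial sums $w_N := \sum_{j=1}^N c_j\phi_j$, which lie in $\mathrm{span}\{\phi_j\}$, converge to $w$ not merely in $L^2(D)$ but in the $X(D)$ norm.

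The computational heart of the argument is identifying the $L^2$ Fourier coefficients of $\Delta w$. Since $w \in H^2(D)$, we have $\Delta w \in L^2(D)$, and Green's second identity gives
\begin{align*}
(\Delta w, \phi_j)_{L^2(D)} = (w, \Delta\phi_j)_{L^2(D)} + \int_{\partial D}\big(\partial_\nu w\,\overline{\phi_j} - w\,\partial_\nu\overline{\phi_j}\big)\,\mathrm{d}s = -\lambda_j c_j,
\end{align*}
where both boundary integrals vanish because $w$ and $\phi_j$ each have zero trace on $\partial D$, i.e.\ they both belong to $H^1_0(D)$. Parseval's identity in $L^2(D)$ then yields $\|\Delta w\|_{L^2(D)}^2 = \sum_{j=1}^\infty \lambda_j^2|c_j|^2 < \infty$, and since $\{\phi_j\}$ is complete we obtain the $L^2$-convergent expansion $\Delta w = -\sum_j \lambda_j c_j\phi_j$. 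Because $\Delta w_N = -\sum_{j=1}^N \lambda_j c_j\phi_j$ is exactly the $N$th partial sum of this series, orthonormality gives
\begin{align*}
\|w - w_N\|_{X(D)}^2 = \|\Delta(w-w_N)\|_{L^2(D)}^2 = \sum_{j=N+1}^\infty \lambda_j^2|c_j|^2 \longrightarrow 0 \quad \text{as } N\to\infty,
\end{align*}
as the tail of a convergent series. This shows that $w$ is approximated arbitrarily well in $X(D)$ by elements of $\mathrm{span}\{\phi_j\}$, establishing density.

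The step I expect to require the most care is the vanishing of the boundary terms in Green's identity, i.e.\ confirming that the two integrations by parts are legitimate and that the homogeneous Dirichlet traces of both $w$ and each $\phi_j$ annihilate the surface integrals; this is where the regularity of $\partial D$ and the membership $w,\phi_j \in H^2(D)\cap H^1_0(D)$ are genuinely used. A secondary point worth stating explicitly is that each $w_N$ indeed lies in $X(D)$, which follows from $\phi_j \in X(D)$ (already noted via elliptic regularity in the excerpt). Finally, I would remark that the same computation shows $\{\phi_j/\lambda_j\}$ is an $X(D)$-orthonormal system, so the claim can equivalently be phrased as the completeness of this rescaled family; this viewpoint may be the cleaner one to record if later sections require the basis explicitly.
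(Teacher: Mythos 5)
Your proof is correct. Note, however, that the paper itself contains no proof of this lemma: it is imported verbatim from \cite{two-eig-cbc} (``From \cite{two-eig-cbc} we have the following result''), so the natural comparison is with Theorem \ref{varspace-rep}, whose content your argument essentially reproduces with the logic reversed. The paper's route is: take density (the lemma) as given, observe that $\psi_j = \phi_j/\lambda_j$ is $X(D)$-orthonormal, conclude that $\{\psi_j\}$ is a complete orthonormal basis of $X(D)$, and then use Green's second theorem to identify the $X(D)$-Fourier coefficients, which yields the $X(D)$-convergent expansion \eqref{fouier-series}. You go the other way: starting only from the standard $L^2(D)$ completeness of the Dirichlet eigenfunctions, you use Green's identity to identify the $L^2$ coefficients of $\Delta w$ as $-\lambda_j c_j$, note that $\Delta$ carries the partial sums of $w$'s expansion onto the partial sums of $\Delta w$'s expansion, and read off $X(D)$-convergence of $w_N \to w$ (hence density) from the tail of a convergent series. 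Your arrangement buys self-containedness: it proves the lemma rather than citing it, and it delivers Theorem \ref{varspace-rep} as a byproduct, since your Parseval identity $\|\Delta w\|_{L^2(D)}^2 = \sum_{j} \lambda_j^2 |c_j|^2$ is exactly the identity $\|f\|^2_{X(D)} = \sum_j \lambda_j^2 |(f,\phi_j)_{L^2(D)}|^2$ asserted there; the paper's arrangement, by contrast, cannot start until the density is supplied externally. The two points you flag as delicate are handled correctly: the boundary integrals vanish because $w$ and $\phi_j$ both have zero trace (alternatively, two applications of Green's first identity, each using the zero trace of one factor, give $(\Delta w,\phi_j)_{L^2(D)} = -(\nabla w, \nabla \phi_j)_{L^2(D)} = (w,\Delta\phi_j)_{L^2(D)}$ and sidestep any discussion of $\partial_\nu w$ on the boundary), and $w_N$ lies in $X(D)$ by the elliptic regularity of the $\phi_j$ already recorded before \eqref{dirichlet-eig}.
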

The eigenvalues are assumed to be arranged in non-decreasing order such that $0 < \lambda_j \leq \lambda_{j+1}$ for all $j \in\N$.  It is well known that the eigenfunctions $\{ \phi_j \}_{j=1}^\infty$ form an orthonormal basis of $L^2(D)$ which implies that for all $f \in L^2(D)$ 
\begin{align}
f = \sum\limits_{j=1}^{\infty} (f,\phi_j)_{L^2(D)} \phi_j \label{fouier-series}
\end{align}
as a convergent series in the $L^2(D)$ norm. This series representation will be used to show the approximation rates for this set of basis functions. To do so, we will show the convergence of this series in the $X(D)$ norm.

\begin{theorem}\label{varspace-rep} 
For all $f \in X(D)$ we have that \eqref{fouier-series} is convergent in the $X(D)$ norm. Moreover, 
$$\Delta f  = \sum\limits_{j=1}^{\infty} -\lambda_j (f,\phi_j)_{L^2(D)} \phi_j$$
and is an $L^2(D)$ norm convergent series which gives $\| f\|^2_{X(D)} =\sum\limits_{j=1}^{\infty} \lambda_j^{2} \big|(f,\phi_j)_{L^2(D)}\big|^2$.
\end{theorem}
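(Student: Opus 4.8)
The plan is to reduce every claim to the single fact that $\{\phi_j\}_{j=1}^\infty$ is an orthonormal basis of $L^2(D)$, applied to the function $\Delta f$, which belongs to $L^2(D)$ because $f \in X(D) \subset H^2(D)$. The central computation is to identify the $L^2(D)$ Fourier coefficients of $\Delta f$ in terms of those of $f$. For $f \in X(D)$ and each $\phi_j$, I would apply Green's second identity (two integrations by parts, justified at the $H^2(D)$ regularity level). Since both $f \in H^1_0(D)$ and $\phi_j \in H^1_0(D)$ vanish on $\partial D$, every boundary integral drops out, and using $\Delta \phi_j = -\lambda_j \phi_j$ together with $\lambda_j \in \R$, I obtain
$$(\Delta f, \phi_j)_{L^2(D)} = (f, \Delta \phi_j)_{L^2(D)} = -\lambda_j (f, \phi_j)_{L^2(D)}.$$

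With this identity in hand, the remaining steps amount to bookkeeping for the $L^2$ orthonormal basis. Since $\Delta f \in L^2(D)$, expanding it via \eqref{fouier-series} yields
$$\Delta f = \sum_{j=1}^\infty (\Delta f, \phi_j)_{L^2(D)} \phi_j = \sum_{j=1}^\infty -\lambda_j (f, \phi_j)_{L^2(D)} \phi_j,$$
an $L^2(D)$-convergent series, which is the ``Moreover'' claim. Parseval's identity for this expansion then gives
$$\| f \|^2_{X(D)} = \| \Delta f \|^2_{L^2(D)} = \sum_{j=1}^\infty \lambda_j^2 \big|(f, \phi_j)_{L^2(D)}\big|^2,$$
establishing the norm formula and, in particular, the finiteness of the right-hand side.

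For convergence of $\sum_j (f, \phi_j)_{L^2(D)} \phi_j$ to $f$ in the $X(D)$ norm, I would set $S_N f = \sum_{j=1}^N (f, \phi_j)_{L^2(D)} \phi_j$, note that $S_N f \in X(D)$ since each $\phi_j \in X(D)$, and compute $\Delta S_N f = -\sum_{j=1}^N \lambda_j (f, \phi_j)_{L^2(D)} \phi_j$, which is exactly the $N$th partial sum of the series just shown to converge to $\Delta f$ in $L^2(D)$. Hence, by orthonormality and the definition $\|\cdot\|_{X(D)} = \|\Delta\cdot\|_{L^2(D)}$,
$$\| f - S_N f \|^2_{X(D)} = \| \Delta f - \Delta S_N f \|^2_{L^2(D)} = \sum_{j=N+1}^\infty \lambda_j^2 \big|(f, \phi_j)_{L^2(D)}\big|^2,$$
which is the tail of the convergent series from the previous step and therefore tends to $0$ as $N \to \infty$.

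The step I expect to be the main obstacle is the boundary-term computation in Green's identity: the entire argument hinges on both $f$ and $\phi_j$ lying in $H^1_0(D)$, so that each boundary integral vanishes, and some care is needed to justify the double integration by parts for $H^2(D)$ functions on the admissible boundaries. Everything downstream is a direct application of the $L^2$ orthonormal-basis theory together with the equivalence $\|\cdot\|_{X(D)} = \|\Delta\cdot\|_{L^2(D)}$ established in Section \ref{problem-statement}.
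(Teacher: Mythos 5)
Your proof is correct, but it takes a genuinely different route from the paper's. The paper's proof starts from Lemma \ref{dense-eig} (density of the span of $\{\phi_j\}_{j=1}^\infty$ in $X(D)$, imported from \cite{two-eig-cbc}): this makes $\psi_j = \phi_j/\lambda_j$ an orthonormal basis of $X(D)$ for the $X(D)$ inner product, so $f = \sum_j (f,\psi_j)_{X(D)}\psi_j$ converges in $X(D)$ from the outset by abstract Hilbert space theory; Green's second identity then identifies $(f,\psi_j)_{X(D)} = \lambda_j (f,\phi_j)_{L^2(D)}$, and the Laplacian is applied term by term to get the second claim. You never invoke that density lemma: you expand $\Delta f \in L^2(D)$ in the $L^2(D)$ orthonormal basis, use the same Green's identity computation to identify the coefficients as $-\lambda_j(f,\phi_j)_{L^2(D)}$, and then pull the $L^2(D)$ convergence of that expansion back to $X(D)$ convergence of the partial sums $S_N f$ through the identity $\|f - S_N f\|_{X(D)} = \|\Delta f - \Delta S_N f\|_{L^2(D)}$, the tail of a convergent series. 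Both arguments hinge on the identical integration-by-parts step (valid because $f$ and $\phi_j$ both have zero trace), but your version is more self-contained: it needs only the standard completeness of $\{\phi_j\}_{j=1}^\infty$ in $L^2(D)$, and it actually re-derives Lemma \ref{dense-eig} as a byproduct, since every $f \in X(D)$ is exhibited as an $X(D)$-limit of elements of the span. What the paper's route buys, given the lemma, is brevity: once the $X(D)$-orthonormal basis is in hand, $X(D)$ convergence is immediate and no partial-sum bookkeeping is required.
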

\begin{proof}
The result follows by Lemma \ref{dense-eig} which gives that  $\psi_j=\phi_j / \lambda_j$ is an orthonormal basis for $X(D)$ by equation \eqref{dirichlet-eig} such that 
$$f = \sum\limits_{j=1}^{\infty} (f,\psi_j)_{X(D)} \psi_j$$
as a $X(D)$ convergent series. Then appealing to Green's 2nd Theorem we have that   
$$ (f,\psi_j)_{X(D)} = -(\Delta f,\phi_j)_{L^2(D)}=  - ( f,\Delta \phi_j)_{L^2(D)} =  \lambda_j ( f, \phi_j)_{L^2(D)}.$$
Therefore, we obtain that 
$$f = \sum\limits_{j=1}^{\infty} (f,\phi_j)_{L^2(D)} \phi_j $$
is a convergent series in the $X(D)$ norm. Then applying the Laplacian term by term to the series representation proves the claim. 
\end{proof}

By the series representation \eqref{fouier-series} for any $f\in L^2(D)$ along with \eqref{dirichlet-eig} we can define the powers of the Laplacian $\Delta^m$ for $m \in \N$ by 
\begin{align} 
\Delta^m f  = \sum\limits_{j=1}^{\infty} (-\lambda_j)^m (f,\phi_j)_{L^2(D)} \phi_j. \label{power}
\end{align}
Note that this is often done to define (fractional) powers of an elliptic operator(see for {\color{black}example} \cite{frac-heat}). We will denote the domain of the $m$th power of the Laplacian in the set $L^2(D)$ as 
$$\mathscr{D} \big( \Delta^m \big) :=\big\{ f \in L^2(D) \, : \,  \Delta^m f \,\, \text{as defined in \eqref{power} is an $L^2(D)$ convergent series} \big\}. $$ 
Therefore, we have that $\mathscr{D} \big( \Delta^m \big)$ is a Hilbert space with the associated norm  
\begin{align}
\| f \|^2_{\mathscr{D} ( \Delta^m )} = \sum\limits_{j=1}^{\infty} \lambda_j^{2m} \big|(f,\phi_j)_{L^2(D)}\big|^2 <\infty. \label{laplacian-m}
\end{align} 
By Theorem \ref{varspace-rep} we have that $X(D) \subseteq \mathscr{D} ( \Delta )$. 

 For our spectral approximation method we will take the conforming computational subspace of $X(D)$ to be given by 
 $$X_N (D) = \text{span}\big\{ \phi_j\big\}_{j=1}^N \quad \text{ for some fixed } \quad N \in \N.$$
 A key ingredient to determining the approximation rate for this set of basis functions is Weyl's law for the Dirichlet eigenvalues(see \cite{weyl-law}). Weyl's law states that there exists two constants $c_1 , c_2 >0$ independent of $j$ such that 
 $$c_1 j^{2/d} \leq \lambda_j \leq c_2 j^{2/d}\quad \text{ for all } \quad j \in \N$$
where again the dimension $d=2,3$. We now consider the $L^2(D)$ projection onto the approximation space $X_N (D)$ which we denote $\Pi_N :X(D) \to X_N (D)$ and is given by
 $$\Pi_N f =  \sum\limits_{j=1}^{N}  (f,\phi_j)_{L^2(D)} \phi_j \quad \text{for some fixed} \quad N \in \N.$$ 
 It is clear that we have the point-wise convergence
 $$\big\| (I-\Pi_N)f \big\|^2_{X(D)} =  \sum\limits_{j=N+1}^{\infty} \lambda_j^{2} \big|(f,\phi_j)_{L^2(D)}\big|^2 \to 0 \quad \text{as} \quad N \to \infty$$ 
 for any $f \in X(D)$ by Theorem \ref{varspace-rep}. We now give a convergence estimate for any $f \in X(D) \cap \mathscr{D} \big( \Delta^m \big)$ where again we have that $\mathscr{D} \big( \Delta^m \big)$ is the subspace of $L^2(D)$ function such that equation \eqref{laplacian-m} is satisfied. 

\begin{theorem}\label{convrate}
Assume that $f \in X(D) \cap \mathscr{D} \big( \Delta^m \big)$ for some $m\geq 2$ then 
$$\big\| (I-\Pi_N) f \big\|_{X(D)} \leq \frac{C}{(N+1)^{2(m-1)/d}} \| f \|_{\mathscr{D} ( \Delta^m )} .$$
\end{theorem}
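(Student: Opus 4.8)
The plan is to work entirely at the level of the Fourier coefficients $(f,\phi_j)_{L^2(D)}$, exploiting the two series identities already in hand. By Theorem \ref{varspace-rep} the projection error has the tail representation
\[
\big\| (I-\Pi_N) f \big\|^2_{X(D)} = \sum_{j=N+1}^{\infty} \lambda_j^{2} \big|(f,\phi_j)_{L^2(D)}\big|^2,
\]
while the regularity norm is $\| f \|^2_{\mathscr{D}(\Delta^m)} = \sum_{j=1}^{\infty} \lambda_j^{2m}\big|(f,\phi_j)_{L^2(D)}\big|^2$ by \eqref{laplacian-m}. First I would split the weight inside the tail sum as $\lambda_j^{2} = \lambda_j^{2-2m}\,\lambda_j^{2m}$, which isolates a decaying factor $\lambda_j^{2-2m}$ (note $2-2m\le -2<0$ since $m\ge 2$) from the weight $\lambda_j^{2m}$ that reconstitutes the $\mathscr{D}(\Delta^m)$ norm.

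Next, because the Dirichlet eigenvalues are arranged in non-decreasing order, for every $j\ge N+1$ we have $\lambda_j\ge \lambda_{N+1}$; as the exponent $2-2m$ is negative this gives the uniform bound $\lambda_j^{2-2m}\le \lambda_{N+1}^{2-2m}$. Pulling this factor out of the sum yields
\[
\big\| (I-\Pi_N) f \big\|^2_{X(D)} \le \lambda_{N+1}^{2-2m}\sum_{j=N+1}^{\infty}\lambda_j^{2m}\big|(f,\phi_j)_{L^2(D)}\big|^2 \le \lambda_{N+1}^{2-2m}\,\| f \|^2_{\mathscr{D}(\Delta^m)},
\]
where the final step merely extends the sum back to all indices $j$.

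Finally I would invoke the lower bound in Weyl's law, $\lambda_{N+1}\ge c_1 (N+1)^{2/d}$. Raising both sides to the negative power $2-2m$ reverses the inequality, so $\lambda_{N+1}^{2-2m}\le c_1^{2-2m}(N+1)^{4(1-m)/d}$. Substituting this into the previous display and taking square roots produces the asserted estimate with $C=c_1^{1-m}$.

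There is no genuine obstacle here: the argument is a one-line weighted-tail estimate once the two series representations from Theorem \ref{varspace-rep} and \eqref{laplacian-m} are available. The only point demanding care is the bookkeeping of inequality directions, since the exponent $2-2m$ is negative and \emph{both} the monotonicity step and the Weyl's law step therefore flip the sense of the inequality; reversing either one would invert the whole bound. It is also worth noting that only the lower Weyl bound $c_1 j^{2/d}\le\lambda_j$ enters, and that the hypothesis $m\ge 2$ is exactly what guarantees $2-2m\le -2$, so that the tail factor genuinely decays and the right-hand side is finite.
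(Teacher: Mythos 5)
Your proof is correct and is essentially identical to the paper's: both split the weight $\lambda_j^{2}=\lambda_j^{2m}\lambda_j^{2-2m}$, bound the decaying factor by $\lambda_{N+1}^{-2(m-1)}$ using the ordering of the eigenvalues, extend the tail sum to recover $\| f\|^2_{\mathscr{D}(\Delta^m)}$, and finish with the lower Weyl bound $\lambda_{N+1}\geq c_1 (N+1)^{2/d}$. Your added remarks on the inequality directions and on why $m\geq 2$ matters are just explicit bookkeeping of steps the paper performs implicitly.
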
 
 \begin{proof}
 To prove the claim we estimate 
 $$\big\| (I-\Pi_N) f \big\|^2_{X(D)} = \sum\limits_{j=N+1}^{\infty} \lambda_j^{2} \big|(f,\phi_j)_{L^2(D)}\big|^2 \leq \frac{1}{{\lambda_{N+1}^{2(m-1)}}}\sum\limits_{j=N+1}^{\infty} \lambda_j^{2m} \big|(f,\phi_j)_{L^2(D)}\big|^2$$
 where we have used the series representation in Theorem \ref{varspace-rep}. Now appealing to Weyl's law and by  \eqref{laplacian-m} we have that 
 $$\big\| (I-\Pi_N)f \big\|^2_{X(D)}  \leq \frac{C}{(N+1)^{4(m-1)/d}}\sum\limits_{j=1}^{\infty} \lambda_j^{2m} \big|(f,\phi_j)_{L^2(D)}\big|^2 
 = \frac{C}{(N+1)^{4(m-1)/d}}  \| f \|^2_{\mathscr{D} ( \Delta^m )}$$
 proving the estimate.
 \end{proof}

\subsection{Convergence and Error Estimates}\label{conv-eig}
 In this section, we will establish the Dirichlet spectral-Galerkin method for the zero-index transmission eigenvalue problem. In our analysis we will use the approximation space $X_N(D)$ defined as the span of the first $N$  Dirichlet eigenfunctions for the Laplacian in $D$. Similar results can be established  by using other conforming approximation subspaces such as a finite element approximation space of piecewise polynomials. Using the convergence analysis for the approximation space in the previous section we are now ready to prove the convergence on the spectral approximation.

 To begin, we will first consider the approximation for the operator $T$ defined in \eqref{def-T} by the $L^2(D)$ projection of $T$ onto the space $X_N (D)$. We will show that this approximation converges in the operator norm. This fact will be used to prove convergence and error estimates for the approximation of the eigenvalues. 
 \begin{theorem}
 Let the operator $T: X(D) \to X(D)$ be as defined by \eqref{def-T} and $\Pi_N :X(D) \to X_N (D)$ be the $L^2(D)$ projection onto $X_N (D)$. 
 Then $\Pi_N T \to T$ as $N \to \infty$ in the operator norm.
 \end{theorem}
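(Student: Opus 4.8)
The plan is to upgrade the pointwise (strong) convergence $\Pi_N \to I$ on $X(D)$, which was recorded immediately before the statement, to convergence in the operator norm by exploiting the compactness of $T$ established in Theorem~\ref{T-compact}. The underlying principle is the standard functional-analytic fact that a sequence of uniformly bounded operators converging strongly to the identity converges in norm when composed with a compact operator.

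First I would record two preliminary facts about $\Pi_N$ regarded as a map on $X(D)$. The first is uniform boundedness: using the basis $\psi_j = \phi_j/\lambda_j$ from Theorem~\ref{varspace-rep}, which is $a(\cdot\,,\cdot)$-orthonormal, one checks that the $L^2(D)$ projection $\Pi_N$ actually coincides with the $X(D)$-orthogonal projection onto $X_N(D)$, so that $\|\Pi_N\|_{X(D)\to X(D)} = 1$ and hence $\|I-\Pi_N\|_{X(D)\to X(D)} \leq 1$. The second is the pointwise convergence $\|(I-\Pi_N)f\|_{X(D)} \to 0$ for each fixed $f \in X(D)$, already obtained from the series representation in Theorem~\ref{varspace-rep}.

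Next, since $\|\Pi_N T - T\|_{X(D)\to X(D)} = \|(I-\Pi_N)T\|_{X(D)\to X(D)} = \sup_{\|f\|_{X(D)}\le 1}\|(I-\Pi_N)Tf\|_{X(D)}$, I would pass to the image of the closed unit ball $B \subset X(D)$ under $T$. By Theorem~\ref{T-compact} the operator $T$ is compact, so $\ov{T(B)}$ is a compact subset of $X(D)$, and the quantity to control becomes $\sup_{g\in\ov{T(B)}}\|(I-\Pi_N)g\|_{X(D)}$. The crux is then that strong convergence is uniform on compact sets: given $\varepsilon>0$, I would cover $\ov{T(B)}$ by finitely many balls of radius $\varepsilon$ centered at $g_1,\dots,g_m$, and use pointwise convergence to pick $N_0$ so that $\|(I-\Pi_N)g_i\|_{X(D)}<\varepsilon$ for all $i$ and all $N\geq N_0$. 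For an arbitrary $g\in\ov{T(B)}$, choosing $g_i$ with $\|g-g_i\|_{X(D)}<\varepsilon$ and invoking $\|I-\Pi_N\|\le 1$ gives $\|(I-\Pi_N)g\|_{X(D)}\leq \|(I-\Pi_N)(g-g_i)\|_{X(D)}+\|(I-\Pi_N)g_i\|_{X(D)}<2\varepsilon$, and taking the supremum over $g$ then letting $\varepsilon\to 0$ yields the claim.

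The step I expect to be the main conceptual obstacle is precisely this last one: pointwise convergence of $\Pi_N$ does not by itself give operator-norm convergence, and the compactness of $T$ is exactly what allows the pointwise statement to be traded for a uniform one on the precompact set $\ov{T(B)}$ via the finite-net argument. The only technical care needed is the verification of the uniform bound $\|\Pi_N\|\le 1$, without which that net argument would not close.
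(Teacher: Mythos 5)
Your proof is correct and takes essentially the same route as the paper: the paper's own proof is a one-line appeal to the standard fact that a uniformly bounded, strongly convergent sequence of operators composed with a compact operator converges in norm, which is precisely the fact you establish via the finite $\varepsilon$-net argument on $\overline{T(B)}$. Your explicit verification that the $L^2(D)$ projection $\Pi_N$ coincides with the $X(D)$-orthogonal projection, so that $\|I-\Pi_N\|_{X(D)\to X(D)}\leq 1$, is a detail the paper leaves implicit but which is indeed needed for the argument to close.
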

 \begin{proof}
 Since the operator $T$ defined in equation \eqref{def-T} is compact by Theorem \ref{T-compact} we have that the point-wise convergence of $\Pi_N$ to the Identity operator on $X(D)$ implies that 
 $$\big\| (I-\Pi_N)T\big\|_{X(D) \mapsto X(D)}\to 0 \quad \text{as} \quad N \to \infty.$$
 Proving the claim. 
\end{proof}

We now define the spectral approximation of the zero-index transmission eigenvalue problem as find the values $k_N \in \C$ such that there is a nontrivial $w_N \in X_N(D)$ satisfying
\begin{align}
a(w_N,\varphi_N)=k_N^2b(w_N,\varphi_N)  \quad \text{ for all } \,\,\, \varphi_N \in X_N(D) \label{spec-varform} 
\end{align} 
where the sesquilinear forms $a( \cdot \, , \, \cdot)$ and $b( \cdot \, , \, \cdot)$ are defined by \eqref{forms}. Here we again assume that the eigenfunctions are normalized such that $\| w_N \|_{L^2(D)}=1$. Therefore, just as in the continuous case we can define the spectral approximation of the solution operator as 
operator $T_N: X(D) \to X_N(D)$ such that for any $f \in X(D)$  
\begin{align}
a\big(T_Nf,\varphi_N\big)=b(f,\varphi_N)  \quad \text{ for all } \,\,\, \varphi_N \in X_N(D). \label{def-TN}
\end{align} 
Since the dimension of the range of $T_N$ is finite we have that it is compact. 
It is also clear that $T_N$ restricted to $X_N(D)$ is an $a( \cdot \, , \, \cdot)$ self-adjoint operator on $X_N(D)$. This gives that $T_N$ has $N$ eigenvalues counting multiplicity. Arguing similarly as in Section \ref{problem-statement} we have that the eigenpair $(k_N , w_N) \in \C \times X_N(D)$ satisfying \eqref{spec-varform} is the eigenpair for the spectral approximation of the solution operator such that $$T_N w_N = k^{-2}_N w_N.$$

In order to attain the convergence as well as an error estimate we will study the convergence of the spectral approximation of the solution operator as $N \to \infty$. Therefore, by appealing to Galerkin orthogonality 
$$a\big(T f - T_Nf,\varphi_N\big) = 0 \quad \text{ for all } \,\,\, \varphi_N \in X_N(D)$$
and Cea's Lemma(\cite{numerics-book} page 372) we have that 
$$\big\| Tf - T_N f \big\|_{X(D)} \leq C \big\| Tf - v_N \big\|_{X(D)} \quad \text{ for any } \,\,\, v_N  \in X_N(D)$$
and for all $f \in X(D)$. From the above estimate we conclude that
$$\big\| Tf - T_N f \big\|_{X(D)} \leq C  \big\| (I-\Pi_N)T f \big\|_{X(D)}$$
where again $\Pi_N$ is the $L^2(D)$ projection onto the approximation space $X_N(D)$.  This analysis gives the following result. 

\begin{theorem} \label{discrete-eig-conv}
Let $(k , w)$ and $(k_N , w_N)$ be the $j$th egienpair for \eqref{zi-varform} and \eqref{spec-varform} respectively. Then as $N \to \infty$ we have that $k_N \to k$ and $w_N \to w$ in $X(D)$.  
\end{theorem}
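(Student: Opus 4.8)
The plan is to deduce the convergence of the eigenvalues and eigenfunctions from the operator-norm convergence $\Pi_N T \to T$ established immediately above, combined with the standard spectral approximation theory for compact self-adjoint operators. First I would recast the statement in terms of the solution operators: by the reasoning already given, $(k,w)$ is the $j$th eigenpair of \eqref{zi-varform} precisely when $\mu = k^{-2}$ is the $j$th eigenvalue of the compact $a(\cdot,\cdot)$-self-adjoint operator $T$ (ordered appropriately), with eigenfunction $w$, and likewise $(k_N,w_N)$ corresponds to the eigenpair $\mu_N = k_N^{-2}$ of $T_N$ restricted to $X_N(D)$. Since $T_N = \Pi_N T$ on $X(D)$ and we have just shown $\|T - \Pi_N T\|_{X(D)\to X(D)} \to 0$, the problem reduces to a purely operator-theoretic convergence statement.

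Next I would invoke the classical convergence theory for eigenvalues of compact operators under norm convergence (for instance the Osborn/Babu\v{s}ka--Osborn framework, or the spectral results in Kato). The key facts are: (i) because $T_N \to T$ in operator norm, the spectrum of $T_N$ converges to that of $T$, so no spurious eigenvalues appear and each nonzero eigenvalue $\mu_j$ of $T$ is approximated by eigenvalues of $T_N$ converging to it; (ii) for an isolated eigenvalue of finite multiplicity, one controls the approximation via the spectral projections. Concretely, I would fix the $j$th eigenvalue $\mu = k^{-2}$, enclose it in a small circle $\Gamma$ in the complex plane separating it from the rest of $\sigma(T)$, and write the spectral projections as Riesz integrals $E = \tfrac{1}{2\pi i}\oint_\Gamma (zI - T)^{-1}\,dz$ and $E_N = \tfrac{1}{2\pi i}\oint_\Gamma (zI - T_N)^{-1}\,dz$. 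Norm convergence of $T_N$ to $T$ forces $\|E - E_N\| \to 0$ (the resolvents converge uniformly on $\Gamma$ since $\Gamma$ avoids both spectra for $N$ large), which yields convergence of the $\mu_N$ to $\mu$ and hence, since $k \neq 0$, convergence $k_N \to k$ after taking the appropriate branch of $\mu^{-1/2}$.

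For the eigenfunction convergence I would use the same spectral projections. The approximate eigenfunction $w_N$ lies in the range of $E_N$ (up to normalization), and the bound $\|(E - E_N)w\|_{X(D)} \to 0$ together with $\|w\|_{L^2(D)} = \|w_N\|_{L^2(D)} = 1$ gives, after renormalizing, that $\mathrm{dist}(w_N, \ker(T - \mu I)) \to 0$ in the $X(D)$ norm; in the simple-eigenvalue case this is exactly $w_N \to w$. One technical point to handle carefully is the normalization and the choice of sign/phase of $w_N$, since eigenfunctions are only determined up to a scalar; I would address this by choosing the phase of $w_N$ to align with $w$, which is always possible in the self-adjoint setting.

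The main obstacle I anticipate is not the abstract operator convergence, which is standard, but packaging the argument cleanly when the eigenvalue $\mu$ has multiplicity greater than one, where individual eigenfunctions need not converge and one must instead phrase the result in terms of convergence of the eigenvalue and of the gap between the discrete and continuous eigenspaces. Since the theorem is stated for ``the $j$th eigenpair,'' I would either restrict attention to the generically simple case or state the eigenfunction convergence as convergence in the sense of the distance to the true eigenspace; the indexing of $k_N$ as the $j$th discrete eigenvalue is justified because operator-norm convergence preserves the ordering of the spectrum for $N$ sufficiently large. The remaining estimates (resolvent bounds on $\Gamma$, translating $\|(I-\Pi_N)T\|$ decay into projection-error decay) are routine given Theorem \ref{convrate} and the compactness of $T$.
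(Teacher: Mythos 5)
Your overall strategy---reduce the theorem to operator-norm convergence of the discrete solution operators and then invoke classical spectral approximation theory for compact operators---is exactly the paper's route (the paper simply cites \cite{osborn} for the spectral part that you spell out with Riesz projections). However, one step as written would fail: the identification $T_N = \Pi_N T$ on $X(D)$ is not true in general. From \eqref{def-T} and \eqref{def-TN}, $T_N f$ is characterized by the Galerkin orthogonality $a\big(Tf - T_N f, \varphi_N\big) = 0$ for all $\varphi_N \in X_N(D)$, i.e. $T_N = P_N^a T$ where $P_N^a$ is the orthogonal projection onto $X_N(D)$ with respect to the $a(\cdot\,,\cdot)$ inner product, which carries the weight $1/n$. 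The projection $\Pi_N$, by contrast, is orthogonal with respect to the $L^2(D)$ inner product (equivalently, by the orthogonality of the $\phi_j$, with respect to the $X(D)$ inner product). These two projections coincide only when $n$ is constant; for variable $n \in L^\infty(D)$, which the theorem allows, $T_N \neq \Pi_N T$, so you cannot conclude $\|T - T_N\|_{X(D)\to X(D)} \to 0$ directly from the previously proved $\|T - \Pi_N T\|_{X(D)\to X(D)} \to 0$.

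The repair is immediate and is what the paper does in the discussion preceding the theorem: Galerkin orthogonality together with Cea's lemma (with constant depending only on $n_{\text{min}}$ and $n_{\text{max}}$ through the coercivity and continuity of $a(\cdot\,,\cdot)$ on $X(D)$) gives $\|Tf - T_N f\|_{X(D)} \leq C \|Tf - v_N\|_{X(D)}$ for every $v_N \in X_N(D)$, and in particular $\|Tf - T_N f\|_{X(D)} \leq C\|(I - \Pi_N)Tf\|_{X(D)}$, hence $\|T - T_N\|_{X(D)\to X(D)} \leq C\|(I - \Pi_N)T\|_{X(D)\to X(D)} \to 0$. With this substitution, the remainder of your argument---uniform resolvent convergence on a contour isolating $\mu = k^{-2}$, convergence of the Riesz spectral projections, hence $k_N \to k$ and convergence of $w_N$ to the eigenspace (which is $w_N \to w$ after fixing the phase in the simple case)---is sound, and your caveat about eigenvalues of higher multiplicity is legitimate: that gap-convergence formulation is precisely the content of the results in \cite{osborn} (see also \cite{babuska-osborn}) which the paper appeals to without spelling out.
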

\begin{proof}
This result follows from the fact that 
$$\big\|T -T_N \big\|_{X(D) \mapsto X(D)} \leq  C \big\| (I-\Pi_N)T  \big\|_{X(D)  \mapsto X(D)} \to 0 \quad \text{ as } \quad N \to \infty$$
and then appealing to the results in \cite{osborn}. 
\end{proof}
 
Now that we have established the convergence of the spectral approximation our next step it to determine the convergence rate. To this end, we will argue similarly to Theorem \ref{convrate} along with the using variational formulations \eqref{zi-varform} and \eqref{spec-varform}. Simple calculations give that for $(k , w)$ and $(k_N , w_N)$ being the $j$th egienpair for \eqref{zi-varform} and \eqref{spec-varform} then 
\begin{align}
a\big(w_N-w , w_N -w\big) -k^2 b\big(w_N-w , w_N -w\big) = \big( k^2_N -k^2 \big)b\big(w_N , w_N\big)\label{eig-difference}
\end{align}
for any $N$. The equality \eqref{eig-difference} will be used to establish the convergence rate for the eigenvalues. 
So we need to establish that the sequence $\big| b\big(w_N , w_N\big) \big| $ is bounded below.
 Therefore, we present the following result. 
\begin{theorem} \label{inf-b}
Let $(k_N , w_N)$ be the $j$th egienpair for \eqref{spec-varform}. Then there is a constant $\beta>0$ independent of $N$ such that 
$$\inf\limits_{N \in \N} \Big\{ \big| b\big(w_N , w_N\big) \big| \Big\} \geq \beta.$$  
\end{theorem}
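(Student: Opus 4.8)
The plan is to test the discrete variational formulation \eqref{spec-varform} against the eigenfunction itself and reduce the claim to two scalar estimates: a lower bound on $a(w_N,w_N)$ and an upper bound on $|k_N|^2$, both uniform in $N$. Taking $\varphi_N = w_N$ in \eqref{spec-varform} gives $a(w_N,w_N) = k_N^2\, b(w_N,w_N)$. Since $a(\cdot\,,\cdot)$ is coercive and Hermitian on $X(D)$, the left-hand side is real and strictly positive for the nontrivial normalized eigenfunction $w_N$; in particular $k_N \neq 0$ and $b(w_N,w_N) \neq 0$, so that $|b(w_N,w_N)| = a(w_N,w_N)/|k_N|^2$. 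It then suffices to bound the numerator below and the denominator above independently of $N$.

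For the numerator, I would use the coercivity bound $a(w_N,w_N) \geq \frac{1}{n_{\text{max}}}\|w_N\|^2_{X(D)}$ coming directly from \eqref{forms}. Writing $w_N = \sum_{j=1}^N c_j \phi_j$ with $c_j = (w_N,\phi_j)_{L^2(D)}$ and applying Theorem \ref{varspace-rep} gives $\|w_N\|^2_{X(D)} = \sum_{j=1}^N \lambda_j^2 |c_j|^2$, while the normalization $\|w_N\|_{L^2(D)} = 1$ reads $\sum_{j=1}^N |c_j|^2 = 1$. Since the Dirichlet eigenvalues are arranged so that $\lambda_j \geq \lambda_1 > 0$, I obtain $\|w_N\|^2_{X(D)} \geq \lambda_1^2$ and hence the $N$-independent lower bound $a(w_N,w_N) \geq \lambda_1^2/n_{\text{max}}$.

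The delicate ingredient is the uniform upper bound on $|k_N|^2$. Because $b(\cdot\,,\cdot)$ is not sign-definite, the usual Rayleigh-quotient argument bounding an eigenvalue by a quotient of norms is unavailable, so I would instead appeal to the convergence already established in Theorem \ref{discrete-eig-conv}. For the fixed index $j$, the discrete eigenvalue $k_N$ (defined for all $N \geq j$) satisfies $k_N \to k$ as $N \to \infty$; a convergent sequence is bounded, so there is a constant $M > 0$, independent of $N$, with $|k_N|^2 \leq M$. Combining the three facts yields $|b(w_N,w_N)| = a(w_N,w_N)/|k_N|^2 \geq \lambda_1^2/(n_{\text{max}} M) =: \beta > 0$ for every $N \geq j$, which is the claimed bound. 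I expect the only real subtlety to be confirming that the boundedness of $\{k_N\}$ may legitimately be invoked here; this is not circular, since Theorem \ref{discrete-eig-conv} was obtained from the operator-norm convergence $\|T - T_N\| \to 0$ together with Osborn's spectral approximation theory, independently of the present lemma, which is in turn used only to quantify the convergence rate through \eqref{eig-difference}.
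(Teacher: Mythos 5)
Your proof is correct, but it takes a genuinely different route from the paper's. The paper argues by contradiction: assuming the infimum is zero, it extracts a subsequence along which $\big| b(w_N,w_N)\big| \to 0$, then invokes the eigenfunction convergence $w_N \to w$ in $X(D)$ from Theorem \ref{discrete-eig-conv} together with the continuity of $b(\cdot\,,\cdot)$ to conclude $b(w,w)=0$; since the variational formulation \eqref{zi-varform} gives $a(w,w)=k^2 b(w,w)$, this forces $a(w,w)=0$, contradicting $\|w\|_{L^2(D)}=1$ because $a(\cdot\,,\cdot)$ is an inner-product on $X(D)$. You instead argue directly from the Rayleigh identity $a(w_N,w_N)=k_N^2\, b(w_N,w_N)$ obtained by testing \eqref{spec-varform} with $w_N$: the numerator is bounded below by the $N$-independent constant $\lambda_1^2/n_{\text{max}}$, using coercivity of $a(\cdot\,,\cdot)$, the spectral representation of the $X(D)$ norm from Theorem \ref{varspace-rep}, and the $L^2$ normalization of $w_N$; the denominator $|k_N|^2$ is bounded above because the convergent sequence $k_N \to k$ is bounded. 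Both proofs rest on Theorem \ref{discrete-eig-conv}, but they use different parts of it: the paper needs the (stronger) eigenfunction convergence in the $X(D)$ norm plus continuity of $b$, whereas you need only eigenvalue convergence --- in fact only boundedness of $\{k_N^2\}$. Your route buys an explicit constant, $\beta = \lambda_1^2\big/\big(n_{\text{max}}\sup_N |k_N|^2\big)$, and avoids any subsequence extraction; the paper's route is shorter and softer. Your closing remark about circularity is also accurate: Theorem \ref{discrete-eig-conv} is established from the operator-norm convergence $\|T-T_N\|_{X(D)\mapsto X(D)} \to 0$ and Osborn's spectral approximation theory, independently of the present lemma (the paper's own proof of this lemma invokes the same theorem), so the logical order is sound.
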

\begin{proof}
To prove the claim we proceed by way of contradiction. To this end, assume no such $\beta$ exists, then we can extract a subsequence still denoted with $N$ such that 
$$\big| b\big(w_N , w_N\big) \big| \to 0 \quad \text{ as } \quad N \to \infty.$$ 
By the continuity of the sesquilinear form $b( \cdot \, , \, \cdot)$ and Theorem \ref{discrete-eig-conv} we obtain 
$$\big| b\big(w_N , w_N\big) \big| \to \big| b\big(w , w\big) \big| \quad \text{ as } \quad N \to \infty$$ 
where $w$ is an eigenfunction corresponding to \eqref{zi-varform}. The variational formulation implies that $a(w,w)=0$ which contradicts the fact that $\| w \|_{L^2(D)}=1$ since $a( \cdot \, , \, \cdot)$ defines an inner-product on $X(D)$. Proving the claim. 
\end{proof}

From Theorem \ref{inf-b} we can conclude that for $(k , w)$ and $(k_N , w_N)$ being the $j$th egienpair for \eqref{zi-varform} and \eqref{spec-varform} respectively then there is a $C>0$ independent of $N$ where
$$\big| k_N^2-k^2 \big| \leq C \big\| w_N -w \big\|^2_{X(D)}.$$
Note that we have used \eqref{eig-difference} and the boundedness of sesquilinear forms $a( \cdot \, , \, \cdot)$ and $b( \cdot \, , \, \cdot)$. In order to obtain the error estimate for the spectral approximation of the zero-index transmission eigenvalues we must estimate the error in the Galerkin approximation in the approximation space $X_N(D)$ on the eigenspace corresponding to $k$. To this end, we will denote the eigenspace corresponding to the zero-index transmission eigenvalue $k$ by $E(k)$. It is clear that $E(k) \subset X(D)$ is finite dimensional and any $u \in E(k)$ satisfies $Tu = k^{-2} u$. With this we can now prove the error estimate. 

\begin{theorem} \label{eig-convrate2}
Let $k $ and $k_N$ be the $j$th eigenvalues for \eqref{zi-varform} and \eqref{spec-varform} respectively. If the corresponding eigenspace $E(k) \subset \mathscr{D} \big( \Delta^m \big)$ for $m\in \N$ then there is a $C>0$ independent of $N$ such that  
$$ \big| k_N^2-k^2 \big| \leq \frac{C}{ (N+1)^{4(m-1)/d}}\sup\limits_{u \in E(k) \, ,\,  \| u \|_{X(D)} =1} \big\| (I -\Pi_N)u \big\|^2_{\mathscr{D} ( \Delta^m )}  $$  
where $\Pi_N :X(D) \to X_N (D)$ is the $L^2(D)$ projection onto $X_N (D)$.
\end{theorem}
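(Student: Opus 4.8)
The plan is to start from the quadratic eigenvalue bound already in hand, namely $\big| k_N^2 - k^2 \big| \leq C \big\| w_N - w \big\|^2_{X(D)}$, which was derived from \eqref{eig-difference} together with Theorem \ref{inf-b} and the boundedness of $a(\cdot\,,\cdot)$ and $b(\cdot\,,\cdot)$. Thus everything reduces to controlling the eigenfunction error $\| w_N - w \|_{X(D)}$ by the best approximation of the eigenspace $E(k)$ in $X_N(D)$, and then converting that $X(D)$-approximation error into the stated rate by a Weyl's-law argument identical in spirit to Theorem \ref{convrate}.

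For the first reduction I would invoke the abstract spectral approximation theory of \cite{osborn}. Since $T$ is compact and $a(\cdot\,,\cdot)$ self-adjoint on $X(D)$ with $Tu = k^{-2}u$ for every $u \in E(k)$, and since the Galerkin operator obeys the Cea bound $\| Tf - T_N f \|_{X(D)} \leq C \| (I-\Pi_N)Tf \|_{X(D)}$, specializing to $u \in E(k)$ gives $\|(T - T_N)u\|_{X(D)} \leq C k^{-2}\|(I-\Pi_N)u\|_{X(D)}$. The theory of \cite{osborn} then bounds the eigenfunction error by the approximability of the invariant subspace, yielding $\| w_N - w \|_{X(D)} \leq C \sup\{ \| (I-\Pi_N)u \|_{X(D)} : u \in E(k),\, \| u \|_{X(D)} = 1 \}$ with $C$ independent of $N$. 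The only delicate point here is the possible multiplicity of $k$, which requires working with the spectral projection onto $E(k)$ rather than a single normalized eigenfunction; this is exactly the setting treated in \cite{osborn}, so no new argument is needed.

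The quantitative heart of the proof is to refine each $X(D)$-norm on the right. For $u \in E(k) \subset \mathscr{D}(\Delta^m)$ I would expand in the Dirichlet basis using Theorem \ref{varspace-rep} and \eqref{laplacian-m} and bound the tail:
$$\big\| (I-\Pi_N)u \big\|^2_{X(D)} = \sum_{j=N+1}^{\infty} \lambda_j^{2} \big| (u,\phi_j)_{L^2(D)} \big|^2 \leq \frac{1}{\lambda_{N+1}^{2(m-1)}} \big\| (I-\Pi_N)u \big\|^2_{\mathscr{D}(\Delta^m)},$$
where the inequality pulls the smallest surviving factor $\lambda_{N+1}^{-2(m-1)}$ out of the tail sum $\sum_{j\geq N+1}\lambda_j^{2m}|(u,\phi_j)_{L^2(D)}|^2 = \|(I-\Pi_N)u\|^2_{\mathscr{D}(\Delta^m)}$. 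Weyl's law in the form $\lambda_{N+1} \geq c_1 (N+1)^{2/d}$ then gives $\lambda_{N+1}^{-2(m-1)} \leq C (N+1)^{-4(m-1)/d}$. Keeping $(I-\Pi_N)u$ inside the $\mathscr{D}(\Delta^m)$-norm, rather than crudely bounding it by $\| u \|_{\mathscr{D}(\Delta^m)}$, produces precisely the sharper form stated in the theorem.

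Combining the three ingredients gives
$$\big| k_N^2 - k^2 \big| \leq C \big\| w_N - w \big\|^2_{X(D)} \leq \frac{C}{(N+1)^{4(m-1)/d}} \sup_{u \in E(k),\, \| u \|_{X(D)} = 1} \big\| (I-\Pi_N)u \big\|^2_{\mathscr{D}(\Delta^m)},$$
which is the claim. I expect the main obstacle to be the first reduction, that is, rigorously extracting the quantitative eigenfunction estimate from \cite{osborn} while correctly handling the multiplicity of $k$ and verifying that the constant stays uniform in $N$; by contrast, the Weyl's-law step is essentially a repetition of the computation already carried out in Theorem \ref{convrate} and should present no difficulty.
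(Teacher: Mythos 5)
Your proposal is correct and follows essentially the same route as the paper: the reduction $\big|k_N^2-k^2\big| \leq C\|w_N-w\|^2_{X(D)}$ from \eqref{eig-difference} and Theorem \ref{inf-b}, the Babu\v{s}ka--Osborn bound of the eigenfunction error by $\|(T-T_N)\|_{E(k)\mapsto X(D)}$, Cea's lemma combined with $Tu=k^{-2}u$ on $E(k)$, and the tail estimate with Weyl's law keeping $(I-\Pi_N)u$ inside the $\mathscr{D}(\Delta^m)$-norm. The only cosmetic difference is that you cite \cite{osborn} where the paper invokes \cite{babuska-osborn} for the eigenfunction estimate; the substance is identical.
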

\begin{proof}
Notice that according to the analysis in \cite{babuska-osborn}, we have that 
$$\| w_N -w \|^2_{X(D)} \leq C \big\|(T -T_N) \big\|^2_{E(k) \mapsto X(D)}.$$
Therefore, by the definition of the norm $\big\| \cdot \big\|_{E(k) \mapsto X(D)}$ we can estimate 
\begin{align*}
\big\|(T -T_N) \big\|^2_{E(k) \mapsto X(D)} &=\sup\limits_{u \in E(k) \, ,\,  \| u \|_{X(D)} =1} \big\|(T -T_N)u \big\|^2_{X(D)}\\
			&\leq C \sup\limits_{u \in E(k) \, ,\,  \| u \|_{X(D)} =1} \big\|(I -\Pi_N)Tu \big\|^2_{X(D)}.
\end{align*}
We further have that 
\begin{align*}
\big\|(T -T_N) \big\|^2_{E(k) \mapsto X(D)} &\leq C \sup\limits_{u \in E(k) \, ,\,  \| u \|_{X(D)} =1} \big\|(I -\Pi_N)Tu \big\|^2_{X(D)}\\
			&=C \sup\limits_{u \in E(k) \, ,\,  \| u \|_{X(D)} =1} \sum\limits_{j=N+1}^{\infty} \lambda_j^{2} \big|(Tu,\phi_j)_{L^2(D)}\big|^2\\
			&=C|k|^{-4}\sup\limits_{u \in E(k) \, ,\,  \| u \|_{X(D)} =1} \sum\limits_{j=N+1}^{\infty} \lambda_j^{2} \big|(u,\phi_j)_{L^2(D)}\big|^2.
\end{align*}
Where we have used the fact that $u \in E(k)$. Now, since we have assumed that $E(k) \subset \mathscr{D} \big( \Delta^m \big)$ we further estimate just as in Theorem \ref{convrate}  			
\begin{align*}
\big\|(T -T_N) \big\|^2_{E(k) \mapsto X(D)} &\leq C\lambda_{N+1}^{-2(m-1)} \sup\limits_{u \in E(k) \, ,\,  \| u \|_{X(D)} =1} \sum\limits_{j=N+1}^{\infty} \lambda_j^{2m} \big|(u,\phi_j)_{L^2(D)}\big|^2\\
			&\leq  C (N+1)^{-4(m-1)/d} \sup\limits_{u \in E(k) \, ,\,  \| u \|_{X(D)} =1} \big\| (I -\Pi_N)u \big\|^2_{\mathscr{D} ( \Delta^m )}
\end{align*}
which we obtain by appealing to Weyl's law. This estimate give the convergence rate proving the claim. 
\end{proof} 
 
We end this section by noting that the convergence rate for the eigenfunctions are the square root of the convergence rate for the eigenvalues. This is clear from the proof of Theorem \ref{eig-convrate2} since estimates for $\| w_N -w \|^2_{X(D)}$ are used to derive the estimates for the eigenvalue convergence rate.

 \begin{theorem} \label{eigfunc-convrate}
Let $w$ and $w_N$ be the $j$th eigenfunctions for \eqref{zi-varform} and \eqref{spec-varform} respectively. If the corresponding eigenspace $E(k) \subset \mathscr{D} \big( \Delta^m \big)$ for  $m \in \N$ then there is a $C>0$ independent of $N$ such that  
$$ \| w_N-w \|_{X(D)} \leq \frac{C }{ (N+1)^{2(m-1)/d}}  \sup\limits_{u \in E(k) \, ,\,  \| u \|_{X(D)} =1}\big\| (I -\Pi_N)u \big\|_{\mathscr{D} ( \Delta^m )}  $$
where $\Pi_N :X(D) \to X_N (D)$ is the $L^2(D)$ projection onto $X_N (D).$  
\end{theorem}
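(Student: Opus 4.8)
The plan is to recycle almost verbatim the operator-norm bound already assembled in the proof of Theorem \ref{eig-convrate2}, since the eigenfunction error and the eigenvalue error differ only in which power of $\big\| T - T_N \big\|_{E(k) \mapsto X(D)}$ appears. First I would invoke the Babuska--Osborn spectral approximation theory (the reference \cite{babuska-osborn} already used for Theorem \ref{eig-convrate2}): for an $a(\cdot\,,\cdot)$ self-adjoint compact operator the eigenfunction error is controlled by the \emph{first} power of the restricted operator-norm defect, namely
$$\| w_N - w \|_{X(D)} \leq C \big\| T - T_N \big\|_{E(k) \mapsto X(D)},$$
whereas the eigenvalue error in Theorem \ref{eig-convrate2} was controlled by its square. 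This single power is the only structural difference from the eigenvalue estimate, and it is precisely what produces the halved exponent $2(m-1)/d$ in the claimed rate.

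Next I would reuse, without modification, the chain of inequalities established in the proof of Theorem \ref{eig-convrate2}, where it was shown, using Galerkin orthogonality, Cea's Lemma, the identity $Tu = k^{-2} u$ for $u \in E(k)$, the series representation from Theorem \ref{varspace-rep}, and Weyl's law, that
$$\big\| T - T_N \big\|^2_{E(k) \mapsto X(D)} \leq \frac{C}{(N+1)^{4(m-1)/d}} \sup\limits_{u \in E(k)\,,\, \| u \|_{X(D)}=1} \big\| (I - \Pi_N) u \big\|^2_{\mathscr{D}(\Delta^m)}.$$
Taking square roots gives $\big\| T - T_N \big\|_{E(k) \mapsto X(D)} \leq C (N+1)^{-2(m-1)/d} \sup\limits_{u} \big\| (I - \Pi_N) u \big\|_{\mathscr{D}(\Delta^m)}$, and substituting this into the Babuska--Osborn bound yields the stated estimate directly.

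Since every analytic ingredient has already been proved in the preceding subsection, I do not expect a genuine obstacle here; the argument is essentially bookkeeping that makes precise the remark already recorded before the statement, that the eigenfunction rate is the square root of the eigenvalue rate. The one point requiring care is the correct normalization in the Babuska--Osborn estimate: the eigenfunction bound must be invoked with the first power of the operator-norm defect and with an appropriate choice of representative $w \in E(k)$ (so that, when $E(k)$ is multidimensional, it is the distance from $w_N$ to the eigenspace that is being measured), ensuring that the constant $C$ stays independent of $N$. Once this normalization is fixed, taking the square root of the eigenvalue-rate computation from Theorem \ref{eig-convrate2} is immediate and the conclusion follows.
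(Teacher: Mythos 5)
Your proposal is correct and follows essentially the same route as the paper: the paper's own (very terse) proof simply points back to the analysis of Theorem \ref{eig-convrate2}, where the bound $\| w_N - w \|^2_{X(D)} \leq C \big\| T - T_N \big\|^2_{E(k) \mapsto X(D)}$ from \cite{babuska-osborn} together with the Weyl's-law estimate yields the claim upon taking square roots, exactly as you describe. Your added remark about choosing the representative in $E(k)$ correctly when the eigenspace is multidimensional is a reasonable refinement of a point the paper leaves implicit, but it does not constitute a different approach.
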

\begin{proof}
The result follows from the analysis given in the proof of Theorem \ref{eig-convrate2}.
\end{proof}

\section{Numerical Examples}\label{numerics}  
In this section, we provided some numerical examples of computing the zero-index transmission eigenvalues via the  Dirichlet spectral-Galerkin method studied in the previous sections. For simplicity, will assume that the domain $D$ is the unit ball in $\R^2$. We refer to  \cite{spectraltev3} for the approximation of the classical transmission eigenvalues via a Spectral Element Method for a spherically stratified media. We will check the accuracy of our Dirichlet spectral-Galerkin method by comparing to the eigenvalues computed by separation of variables for constant refractive index $n$ and conductivity $\eta$. The computations in this section are motivated by the work in \cite{gpinvtev} where the authors studied the convergence of the spectral-Galerkin method for computing the classical transmission eigenvalues where the basis functions in the approximation space are the eigenfunctions for the bilaplacian with {\color{black} zero clamped} plate boundary conditions. We will also consider estimating the refractive index $n$ for $\eta$ either small or large but unknown. To do so, we will use the convergence results given in \cite{two-eig-cbc} where it is shown that as the conductivity tends to zero or infinity one obtains an eigenvalue problem that only depends on the refractive index. The estimation of the refractive index from the scattering data has been studied in \cite{isp-n1,isp-n2} to name a few examples. In these papers the authors show that the far and near field data can be used to recover the classical transmission eigenvalues and use monotonicity results to recover a constant refractive index or estimate a variable refractive index. Here we numerically study this problem for the zero-index transmission eigenvalues.

 \subsection{Computing the Eigenvalues}\label{eig-approx}
We are now ready to compute the zero-index transmission eigenvalues using the Dirichlet spectral-Galerkin approximation presented in Section \ref{conv-analysis}. All of our experiments are done with the software \texttt{MATLAB} 2018a on an iMac with a 4.2 GHz Intel Core i7 processor with 8GB of memory. To begin, we will describe an effective method for computing the approximated eigenvalues that satisfy \eqref{spec-varform}. Since the domain $D$ is given by the unit circle we can determine the basis functions from separation of variables. Therefore, the basis functions are taken to be
$$\phi_{j}(r,\vartheta) = J_{p} \left(\sqrt{\lambda_{p,q}} \, r\right) \cos(p \vartheta)\quad \text{with index} \quad j=j(p,q) \in \N.$$
Here square root of the eigenvalues $\sqrt{\lambda_{p,q}}$ corresponds to the $q$th positive root of the $p$th first kind Bessel function denoted $J_p$ for all $p\in \N \cup\{0\}$ and $q \in \N$. 

In the following numerical examples we take 24 basis functions where $0\leq p\leq 5$ and $1\leq q \leq 4$ which will give that the approximation space is defined as 
$$ \text{Span} \Big\{ \phi_{j(p,q)}(r,\vartheta)  \Big\}_{p=0\, ,\, q=1}^{p=5 \, ,\, q=4} \subset X(D).$$
For our Dirichlet spectral-Galerkin approximation we will solve \eqref{spec-varform} for $w_N$ in the aforementioned approximation space. This gives that the  approximation of the eigenfunctions will  have the form
$$w_N (x) = \sum\limits_{j=1}^{24} \omega_j \phi_j (x).$$
Therefore, the spectral approximation of the eigenvalues $k_N$ satisfying \eqref{spec-varform} correspond to the eigenvalues for the matrix equation  
\begin{align}
\left( {\bf A}-k_N^2{\bf B}\right) \vec{\omega} =0 \quad \text{ where } \quad \vec{\omega}=\big(\omega_1, \cdots ,\omega_{24} \big)^{\top} \neq 0. \label{g-eig}
\end{align}
We have that the $24 \times 24$ Galerkin mass and stiffness matrices in the Spectral approximation \eqref{spec-varform} are given by
$${\bf A}_{i,j}  = a(\phi_i , \phi_j) \quad \text{ and } \quad {\bf B}_{i,j} =  b(\phi_i , \phi_j) \quad \text{ for } \,\, i,j =1, \cdots , 24.$$
The integrals can be simplified by using \eqref{dirichlet-eig} to evaluate the sesquilinear forms $a( \cdot \, , \, \cdot)$ and $b( \cdot \, , \, \cdot)$ giving that 
$$a(\phi_i , \phi_j) =  \lambda_i \lambda_j \int\limits_{D} \frac{1}{n(x)}  \phi_i (x) \, {\phi}_j(x) \, \text{d}x $$
and
$$b(\phi_i , \phi_j) =\lambda_i  \int\limits_{D}  \phi_i (x) \, {\phi}_j(x) \, \text{d}x - \int\limits_{\partial D} \frac{1}{\eta(x) }\,  {\partial_\nu \phi_i (x)}  \, {\partial_\nu  {\phi}_j (x) }\, \text{d}s.$$
Notice that we have used that for the Dirichlet eigenfunctions $\phi_i$ and $\phi_j$ we have the following integral identities  
$$\int\limits_{D} \frac{1}{n(x)} \Delta  \phi_i (x) \, \Delta{\phi}_j(x) \, \text{d}x =  \lambda_i \lambda_j \int\limits_{D} \frac{1}{n(x)}  \phi_i (x) \, {\phi}_j(x) \, \text{d}x$$
and 
$$ \int\limits_{D} \grad \phi_i (x) \cdot \grad  {\phi}_j(x) \, \text{d}x =  \lambda_i \int\limits_{D} \phi_i (x) \,  {\phi}_j(x) \, \text{d}x \quad \text{for any} \quad i,j \in \N. $$ 
In our calculations, we use the fact that the Dirichlet eigenfunctions are orthogonal in $L^2(D)$ which gives that the volume integral in $b(\phi_i , \phi_j)$ corresponds to a diagonal matrix. 

To compute the Galerkin matrices we implement a 2d Gaussian quadrature method. In the numerical examples the integrals are written in polar coordinates where 12 quadrature points are used to evaluate the radial and angular parts of the integrals. The discretized eigenvalue problem \eqref{g-eig} is then solved using the `\texttt{polyeig}' command in \texttt{MATLAB} since \eqref{g-eig} is a quadratic eigenvalue problem for the parameter $k_N$. From \cite{two-eig-cbc} we have that for $n$ and $\eta$ constant the eigenvalues $k$ satisfy the transcendental equation
$$d_m(k):=k\sqrt{n}J_{|m|}' \big(k\sqrt{n}\big) -\big(\eta +|m|\big)J_{|m|}\big(k\sqrt{n}\big)=0 \quad \text{ for all } \quad m \in \Z.$$
Using the `\texttt{fzero}' command in \texttt{MATLAB} we can determine the exact zero-index transmission eigenvalues. 
In Figures \ref{dmplot1} and \ref{dmplot2} we plot the function $d_m(k)$ for the values of $m=0,1,2,3,4,5$ with $k \in [0,5]$. 
To validate our approximation method we compare the Approximation v.s. the Exact eigenvalues presented in Tables \ref{eig-compare1} and \ref{eig-compare2}. 
\begin{figure}[ht!]
\centering  
\includegraphics[width=11cm]{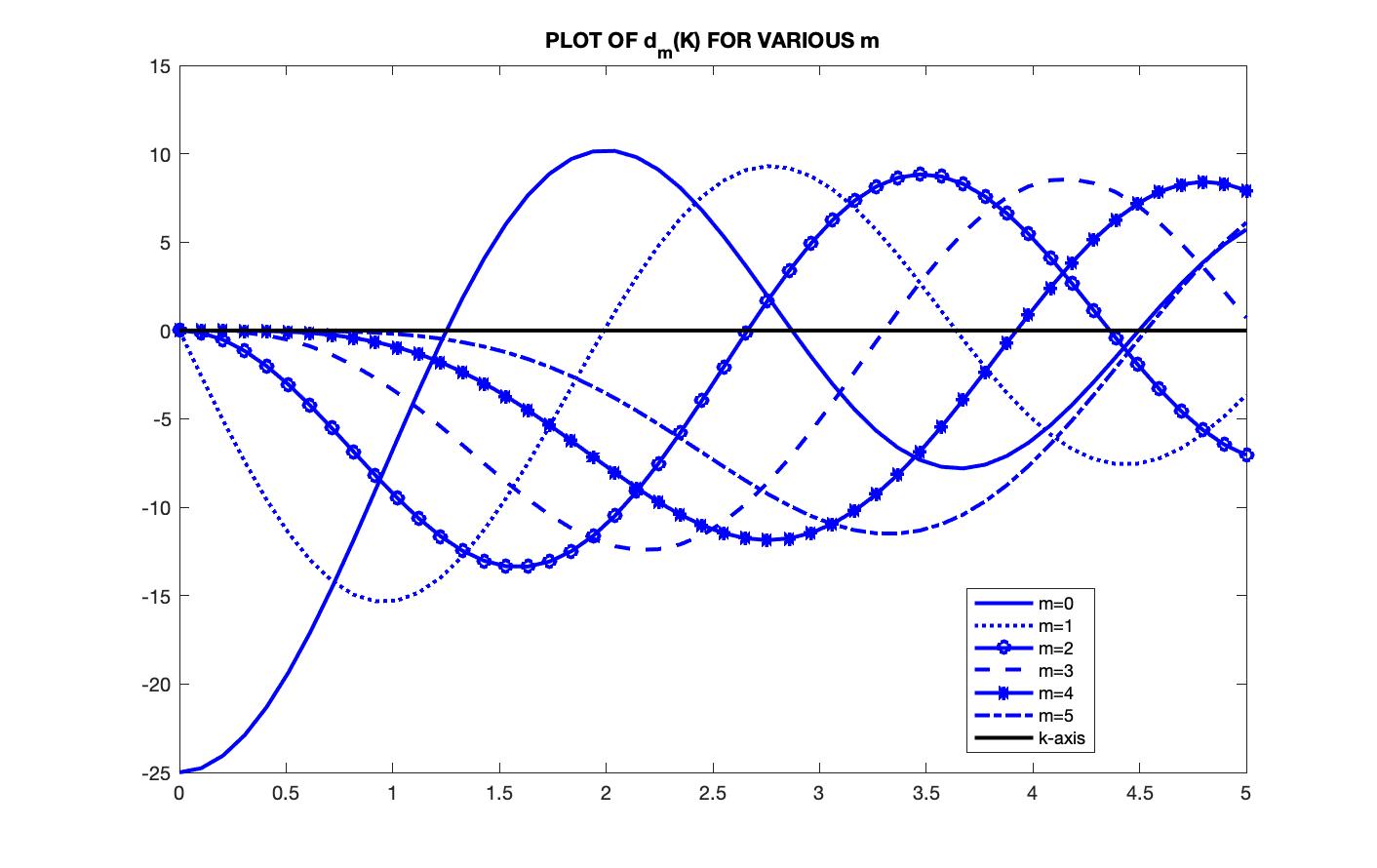}
\caption{Plot of the function  $d_m(k)$ for the values of $m=0,1,2,3,4,5$ with coefficient parameter $n=4$ and $\eta=25$.}\label{dmplot1}
\end{figure}

\begin{table}[ht!]
\centering  
\begin{tabular}{| c | c | c | c |} 
\hline                  
      &  Approximation     &  Exact  & Relative Error    \\ [0.5ex] 
\hline                  
$\, k_1\, $ & $1.25185566197 $ & $1.25132121108 $ &  $4.27 \times10^{-4}$ \\
$k_2$ & $1.99243796762 $ & $1.99043273844 $ &  $0.0010$ \\
$k_3$ & $2.878602256114 $ & $2.66364226350 $ & $0.0807$ \\
\hline 
\end{tabular}
\caption{Comparison of the Dirichlet spectral-Galerkin approximation v.s. Exact zero-index transmission eigenvalues  for $n=4$ and $\eta = 25$. }\label{eig-compare1}
\end{table}

\begin{figure}[ht!]
\centering  
\includegraphics[width=11cm]{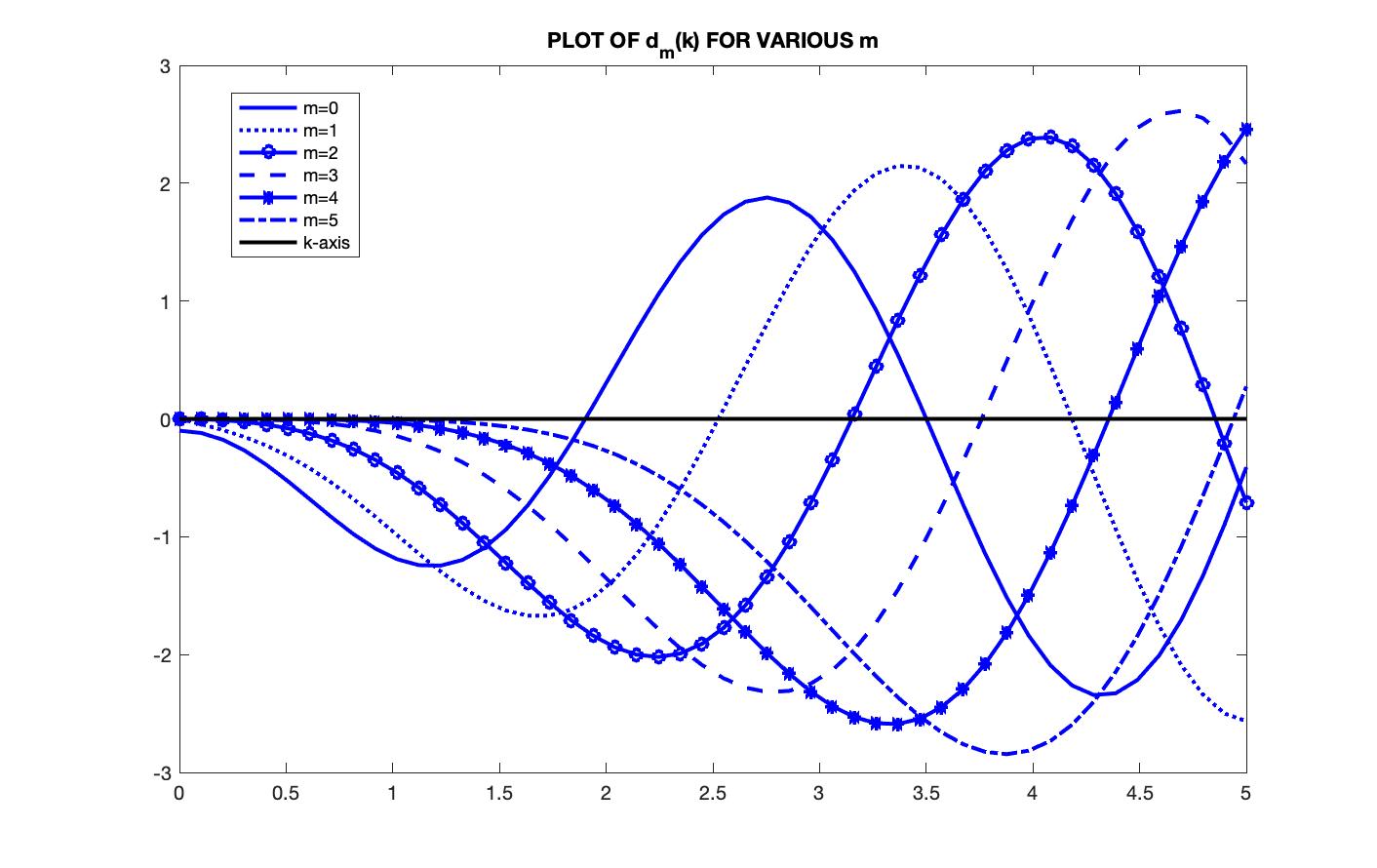}
\caption{Plot of the function  $d_m(k)$ for the values of $m=0,1,2,3,4,5$  with coefficient parameter $n=4$ and $\eta=1/10$.}\label{dmplot2}
\end{figure}

\begin{table}[ht!]
\centering  
\begin{tabular}{| c | c | c | c |} 
\hline                  
     & Approximation    &  Exact   & Relative Error    \\ [0.5ex] 
\hline                  
$\, k_1\, $ & $2.00233111434 $ & $1.90276223549 $ &  $0.0523$ \\
$k_2$ & $2.68333505931 $ & $ 2.55809498688 $ & $0.0490$ \\
$k_3 $ & $3.69381250080 $ & $3.18227361485 $ & $0.1607$ \\
\hline 
\end{tabular}
\caption{Comparison of the Dirichlet spectral-Galerkin approximation v.s. Exact zero-index transmission eigenvalues  for $n=4$ and $\eta = 1/10$. }\label{eig-compare2}
\end{table}

The plot of the relative error for the first eigenvalue $k_1$ is given where we let $n$ vary in the interval $[3,5]$ for $\eta = 25$ or $\eta = 1/10$. We use $d_0(k)$ to compute the exact first zero-index transmission eigenvalues. In Figure \ref{errorplot} we see that the relative error for $\eta = 25$  is on the order of $10^{-4}$ where as the relative error for $\eta = 1/10$  is on the order of $10^{-2}$. This seems to imply that the Dirichlet spectral-Galerkin approximation method is better suited for problems with larger conductivity. 
\begin{figure}[ht!]
\hspace{-0.2in}\includegraphics[width=8.5cm]{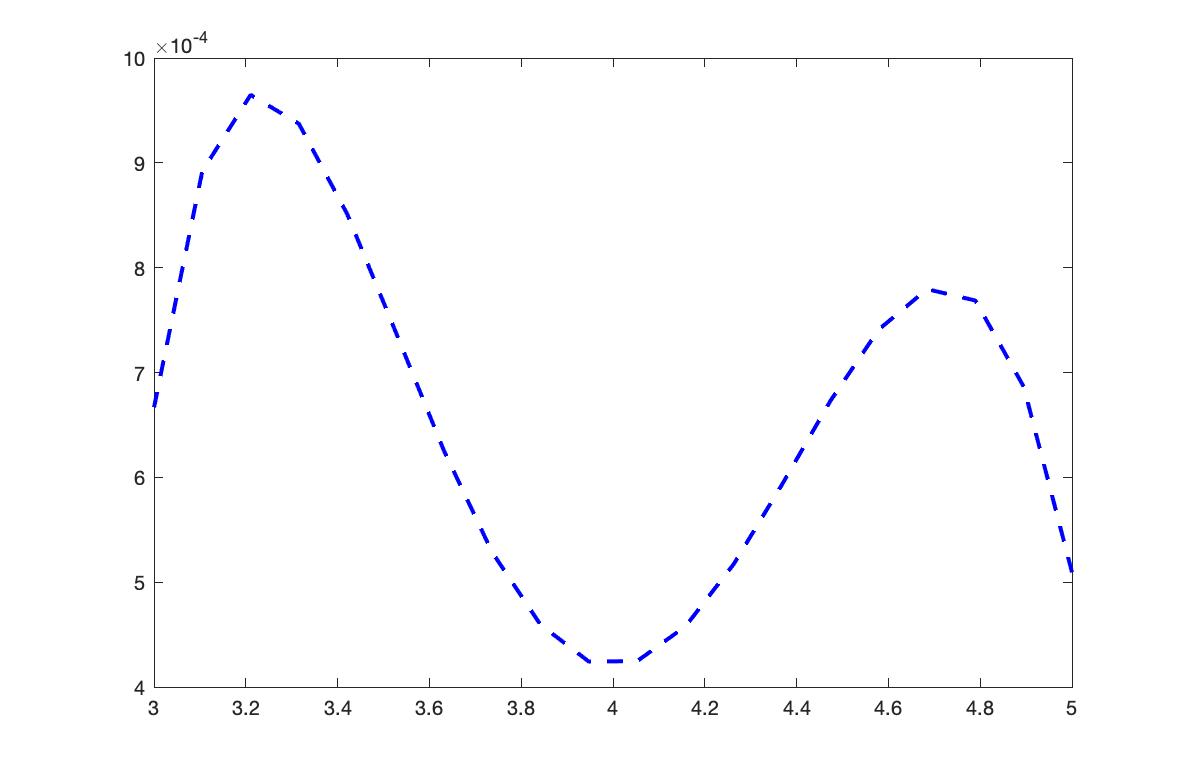}\includegraphics[width=8cm]{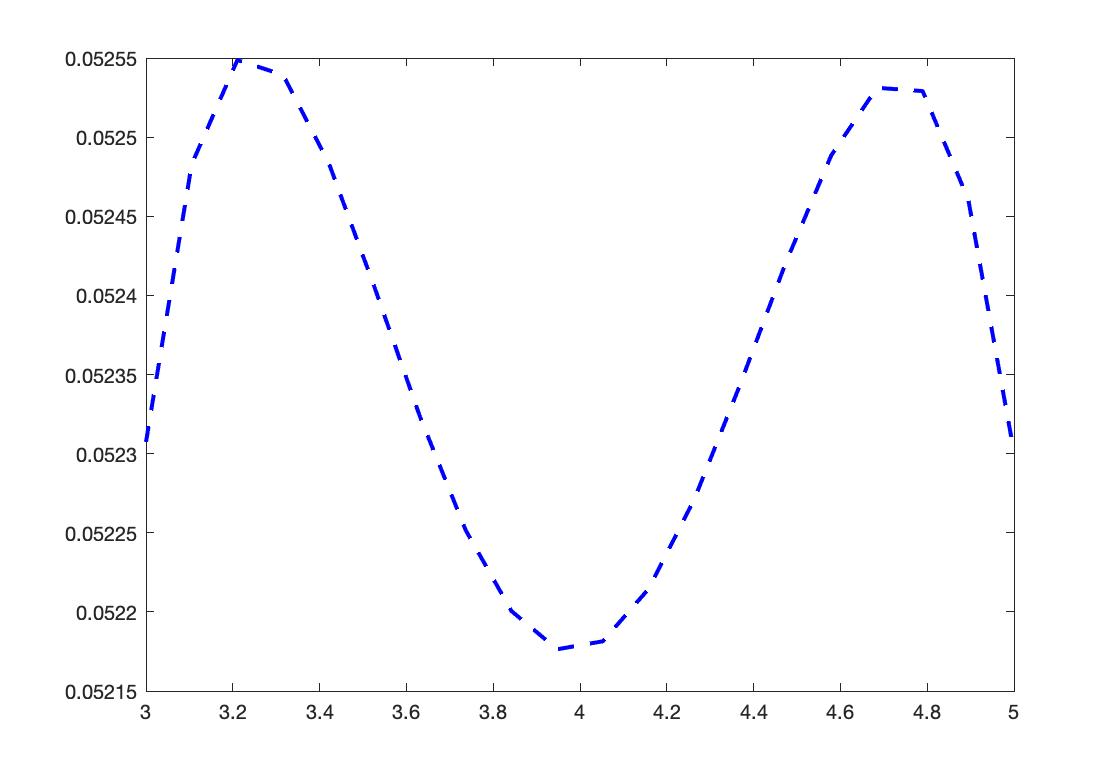}
\caption{Here is a plot of the relative error for the 1st eigenvalue for $n \in [3,5]$ for $\eta =25$ and $\eta=1/10$ on the left and right respectively. }\label{errorplot}
\end{figure}

We now consider computing the eigenvalues for variable coefficients. Here we take a smooth and a piece-wise constant refractive index defined as 
$$n_1=4+\text{exp}(-r^2) \quad \text{ and } \quad n_2= 4*\mathbbm{1}_{(0.25 \leq r < 1)} + 2*\mathbbm{1}_{(r<0.25)} $$
for a spherically stratified media. Here $\mathbbm{1}_{I}$ denotes the indicator function on the interval $I$. In Table \ref{eig-variable1} and \ref{eig-variable2} we report the first three real zero-index transmission eigenvalues computed via our approximation method for various conductivities. Here we take three different parameters $\eta$. Two of the conductivities are constants taken to be $25$ and $1/10$ while the third is a variable conductivity parameter 
$$\eta=1/\big(10+\sin^2(2\theta)\big).$$ 
Recall, that the analysis in Section \ref{conv-eig} also gives the convergence of the eigenfunctions in Theorems \ref{discrete-eig-conv} and \ref{eigfunc-convrate}. So, presented with Tables \ref{eig-variable1} and \ref{eig-variable2} are the plots for the first two corresponding eigenfunctions for the spherically stratified refractive indices $n_1$ and $n_2$ defined above. 

\begin{table}[ht!]
\centering  
\begin{tabular}{| c | c | c | c |} 
\hline                  
          & $\eta = 25$     &  $\eta = 1/10$   & $\eta=1/\big(10+\sin^2(2\theta)\big)$    \\ [0.5ex] 
\hline                  
$\, k_1\, $ & $1.13937194615 $ & $1.83076451238 $ &  $1.83137577764$  \\
$\, k_2\, $ & $1.82895096548 $ & $2.45629411834 $ &  $2.45677744275$ \\
$\, k_3\, $ & $2.63523894008 $ & $3.37709503556 $ &  $3.37739837186$ \\
\hline 
\end{tabular}
\includegraphics[width=7cm]{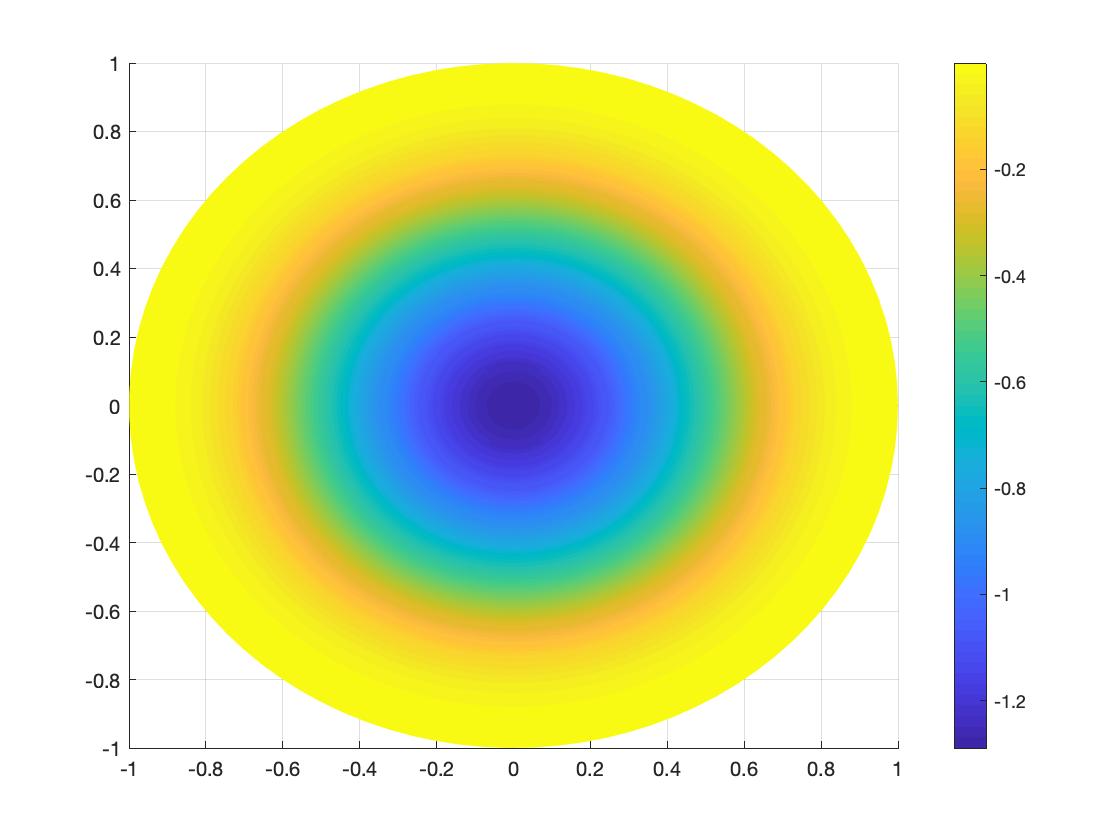}\includegraphics[width=7cm]{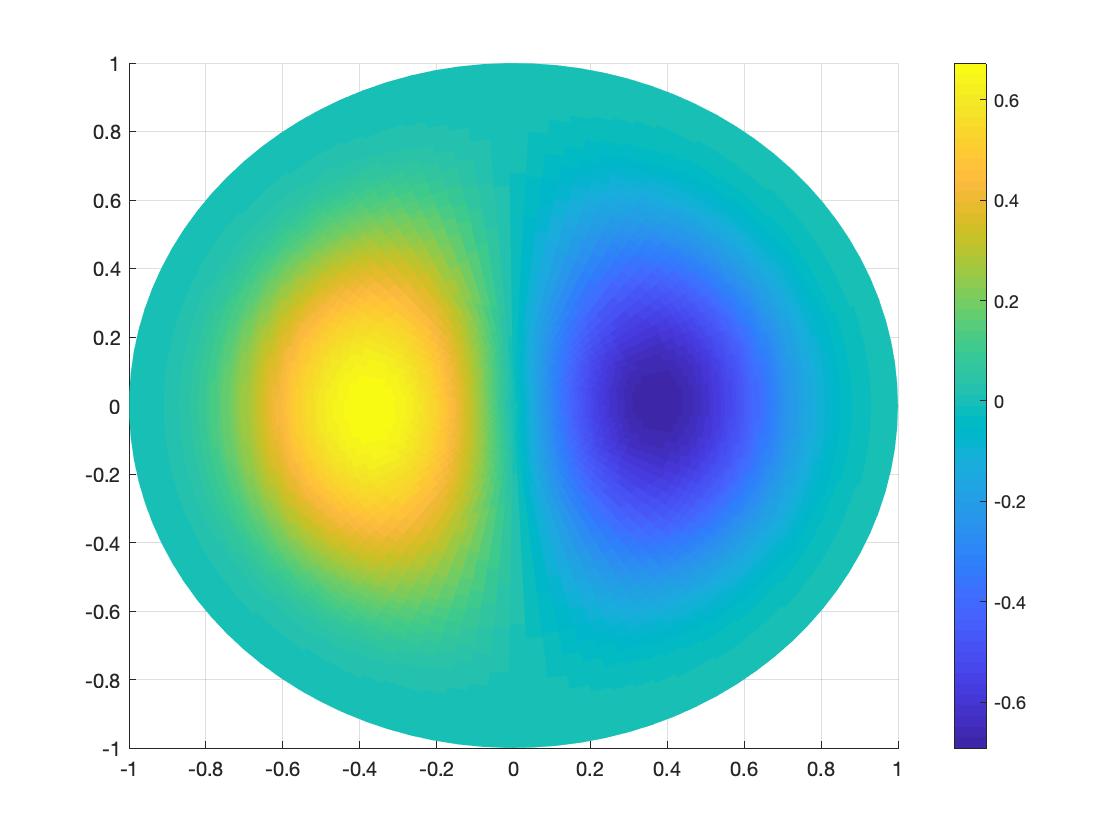}
\caption{The first three real zero-index transmission eigenvalues for $n_1$. We also plot the eigenfunctions corresponding to the eigenvalues $k_1$ and $k_2$ for $\eta=1/10$.}\label{eig-variable1}
\end{table}

\begin{table}[ht!]
\centering  
\begin{tabular}{| c | c | c | c |} 
\hline                  
       & $\eta = 25$    &  $\eta = 1/10$   & $\eta=1/\big(10+\sin^2(2\theta)\big)$    \\ [0.5ex] 
\hline                  
$\, k_1\, $ & $1.33698344835 $ & $2.20985718061 $ &  $2.21052625727$  \\
$k_2$ & $2.01851716957 $ & $2.77517970657 $ &  $2.77572095891$ \\
$k_3$ & $3.21273738555 $ & $4.14541092513 $ &  $4.14571008141$ \\
\hline 
\end{tabular}
\includegraphics[width=7cm]{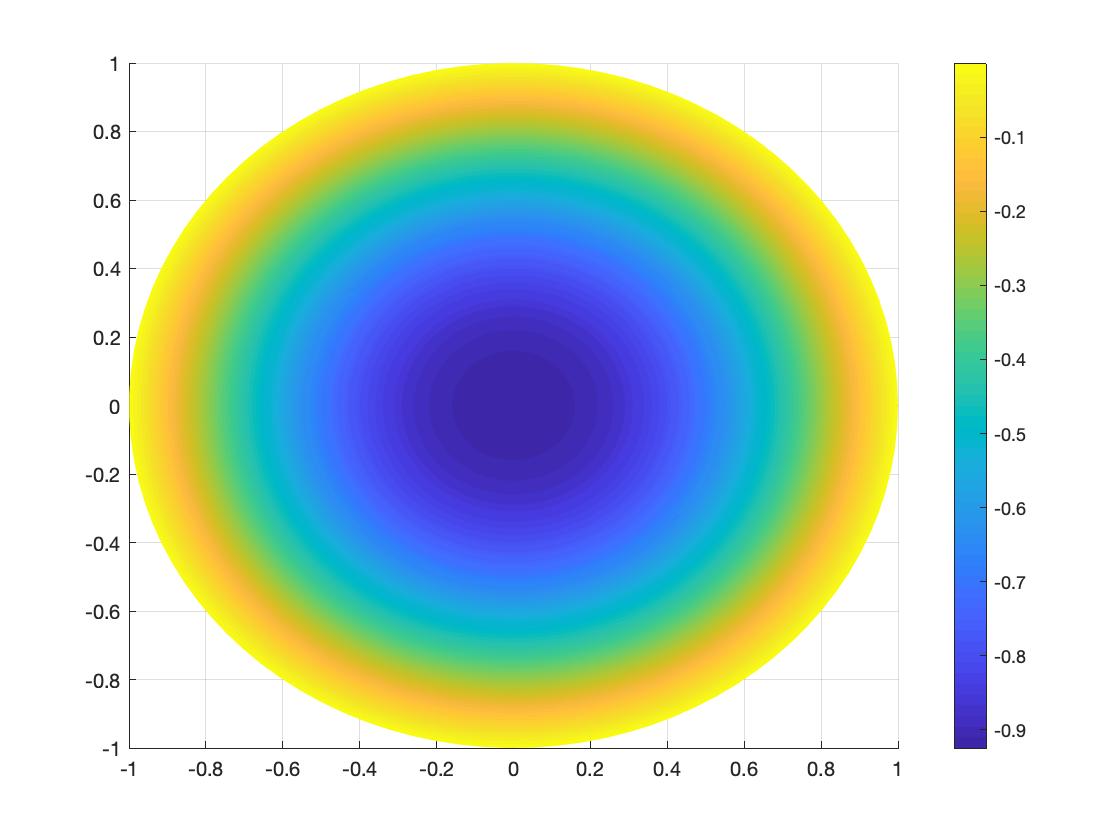}\includegraphics[width=7cm]{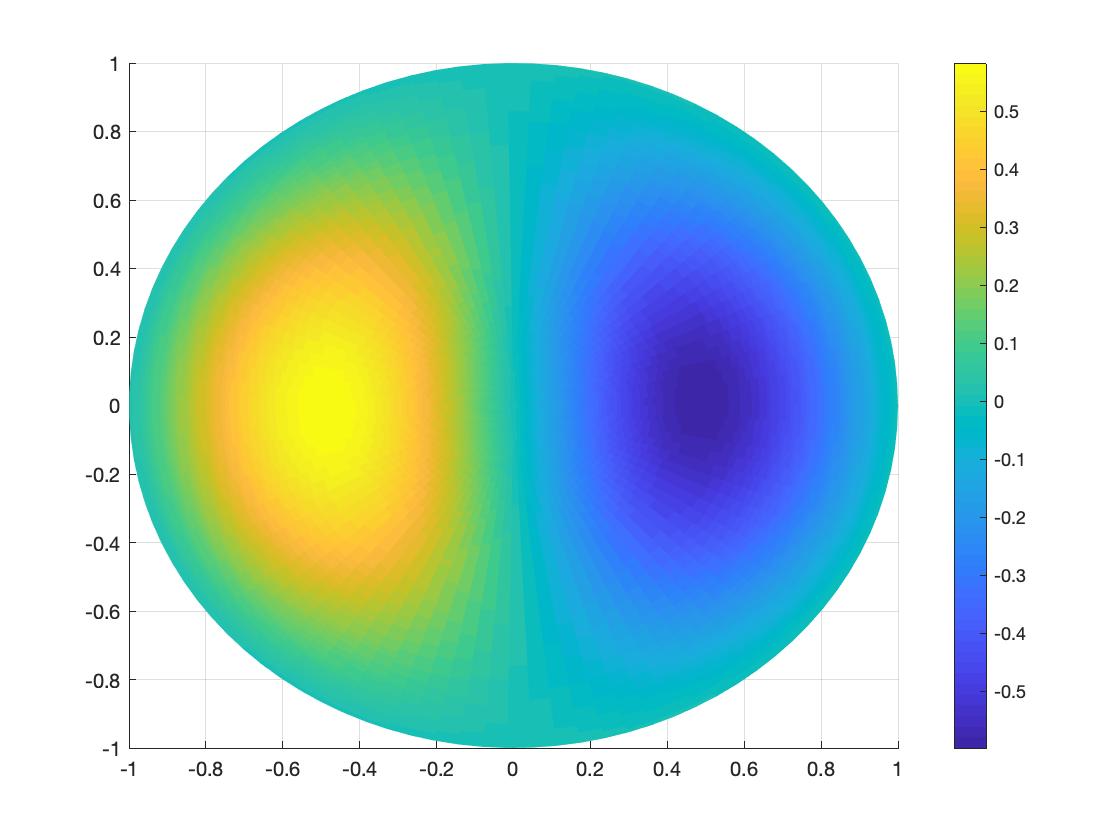}
\caption{The first three real zero-index transmission eigenvalues for $n_2$. We also plot the eigenfunctions corresponding to the eigenvalues $k_1$ and $k_2$ for $\eta=25$.}\label{eig-variable2}
\end{table}

Notice that the computed zero-index transmission eigenvalues are monotonically decreasing with respect to the coefficients which is predicted by the theory in \cite{two-eig-cbc}. We can see that for the first three real zero-index transmission eigenvalues that for $\eta = 25$ and $1/10$ we have 
$$k_j(n_1 , \eta)  \leq k_j(4,\eta) \leq k_j(n_2 , \eta).$$
Similarly comparing the reported eigenvalues we see the monotonicity with respect to the conductivity $\eta$ for various refractive indices.

 \subsection{Estimating the Refractive Index}\label{n-approx}
In this section, we present numerical examples for estimating the refractive index from the knowledge of the zero-index transmission eigenvalues. It has been shown in \cite{te-cbc,te-cbc2} that the classical transmission eigenvalues with a conductive boundary condition can be recovered from the scattering data via the Linear Sampling Method and the Inside-Out Duality, see \cite{cchlsm,armin} for details of these methods to recover the transmission eigenvalues. Therefore, we will assume that the zero-index transmission eigenvalues can be recovered from the scattering data and we wish to estimate $n$.

In order to estimate the refractive index from the zero-index transmission eigenvalues $k(n,\eta)$ we will restrict ourselves to the case where $\eta$ is either sufficiently large or small. The case for $\eta$ known was considered numerically in \cite{two-eig-cbc}. The limiting behavior of the zero-index transmission eigenvalues was studied in \cite{two-eig-cbc} as $\eta$ tends to zero or infinity. It has been shown that $k(n,\eta) \to \tau(n)$ as $\eta \to \infty$ where $\tau$ is a `modified' Dirichlet eigenvalue satisfying that there exist a nontrivial $v$ such that 
$$\Delta v +\tau^2 n v= 0 \,\, \text{ in } \,\, D \quad \text{ where } \quad v \in H^1_0(D)$$
or as $\eta \to 0$ where $\tau$ is a `modified' plate buckling eigenvalue satisfying that there exist a nontrivial $v$ such that 
$$\Delta \frac{1}{n} \Delta v +\tau^2 \Delta v=0  \,\, \text{ in } \,\, D \quad \text{ where } \quad v \in H^2_0(D).$$ 
This limiting behavior will allow use to estimate the refractive index without the knowledge of $\eta$ on the boundary. 

This gives that if it is known a prior that $\eta \ll 1$ or $\eta \gg 1$ then we can estimate the refractive index by finding the constant $n_{\text{approx}}$ such that $k_1(n,\eta) =  \tau_1(n_{\text{approx}})$ where $\tau_1$ is the first `modified' Dirichlet eigenvalue for $\eta \gg 1$ or the first `Modified' plate buckling eigenvalue for $\eta\ll 1$. It is known that $\tau_1$ depends monotonically on $n$ by the max-min principle \cite{eig-FEM-book}. Since $D$ is assumed to be known we can compute $\tau_1$ for any constant refractive index $n$ via the methods from \cite{fem-4th-order,BEM-dirichlet}. To numerically approximate $\tau_1$ we use separation of variables  since $D$ is the unit circle. Therefore, we have that the `modified' Dirichlet eigenvalues for a constant $n$ satisfies 
$$ J_{|m|} \big( \tau \sqrt{n }\big)=0 \quad \text{ for all } \quad m \in \Z$$
and the `modified' plate buckling eigenvalues for a constant $n$ satisfies 
$$ \tau \sqrt{n}J_{|m|}' \big(\tau\sqrt{n}\big) -|m| J_{|m|}\big(\tau\sqrt{n}\big)=0 \quad \text{ for all } \quad m \in \Z. $$

To determine the approximate refractive index we first find the polynomial interpolation for $\tau_1(n)$ for constant $n\in [2,7]$ via the `\texttt{polyfit}' command in \texttt{MATLAB}. Then we solve for the constant $n_{\text{approx}}$ such that 
$$ k_1(n,\eta) = \tau_1(n_{\text{approx}}) $$ 
via the `\texttt{fzero}' command in \texttt{MATLAB}. Since $\tau_1$ is a deceasing function of $n$ the above equation has a unique solution $n_{\text{approx}}$. The results are reported in Tables \ref{estimate-n1} and \ref{estimate-n2} for the spherically stratified refractive indices used in our previous calculations with variable coefficient conductivity parameters. 

\begin{table}[ht!]
\centering  
\begin{tabular}{| c | c | c |} 
\hline                  
refractive index      &  1st  eigenvalue  &  $n_{\text{approx}}$    \\ [0.5ex] 
\hline                  
$n=4  $ & $ \, 2.00296851019\, $&  $\, 3.65301\, $  \\
$n=4+\exp(-r^2) $& $1.83137577764$ &  $4.39393$ \\
$n=4*\mathbbm{1}_{(0.25 \leq r < 1)} + 2*\mathbbm{1}_{(r<0.25)}   $ & $ 2.21052625727 $ &  $3.00333$ \\
\hline 
\end{tabular}
\caption{Estimation of the refractive index $n$ for $\eta = 1/\big(10+\sin(2\theta)^2\big)$. }\label{estimate-n1}
\end{table}

\begin{table}[ht!]
\centering  
\begin{tabular}{| c | c | c |} 
\hline                  
refractive index      &  1st eigenvalue  &  $n_{\text{approx}}$    \\ [0.5ex] 
\hline                  
$n=4  $ & $\, 1.25192566197 \, $&  $\, 3.866691\, $  \\
$n=4+\exp(-r^2) $& $ 1.11323689887$ &  $4.681572$ \\
$n=4*\mathbbm{1}_{(0.25 \leq r < 1)} + 2*\mathbbm{1}_{(r<0.25)}   $ & $1.30146479659 $ &  $3.404278$ \\
\hline 
\end{tabular}
\caption{Estimation of the refractive index $n$ for $\eta = 25(2+\sin^4(\theta))$. }\label{estimate-n2}
\end{table}

Simple calculus  gives that the average value of $n=4+\exp(-r^2)$ in the unit disk to be $4+(1-\exp(-1)) \approx 4.6321205$ which is fairly close to the approximation in Table \ref{estimate-n2}. We can also compute the average value for the piece-wise constant  refractive index $n=4*\mathbbm{1}_{(0.25 \leq r < 1)} + 2*\mathbbm{1}_{(r<0.25)} $ in the unit disk which is $3.875$. Also notice that due to the monotonicity of $\tau_1$ we have that $n_{\text{min}} \leq n_{\text{approx}}\leq n_{\text{max}}$. In the case of the classical transmission eigenvalues it has been numerically documented that estimating the refractive index by a constant leads to determining it's average value \cite{isp-n2}. Table \ref{estimate-n2}  seems to suggest that for $\eta \gg 1$ that estimating the refractive index by a constant may also lead to determining the average value.

\subsection{{\color{black}A Numerical Example for the Unit Square}}\label{square}
For completeness we provide numerical examples for the unit square $D=(0,1)^2$. This is given to show that this method also works for polygonal domains with no reentrant corners. Here we wish to show the accuracy of the approximation for this domain. To this end, we will compute the zero-index transmission eigenvalues for a constant $\eta$. To establish that the approximation is accurately computing the eigenvalues we will show test convergence as $\eta \to \infty$ as well as the monotonicity. We will also estimate the refractive index $n(x_1 ,x_2)$ assuming $\eta \gg 1$ just as we did for the unit sphere. Therefore, the zero-index transmission eigenvalues $k$ should converge to the `modified' Dirichlet eigenvalues for the unit square.  

For the approximate we have that  the basis functions are taken to be
$$\phi_{j}(x_1,x_2) = \sin(p\pi x_1)\sin(q\pi x_2) \quad \text{with index} \quad j=j(p,q) \in \N.$$
In the numerical examples we take 25 basis functions where $1\leq p,q\leq 5$ which gives the spectral approximation space  as 
$$ \text{Span} \Big\{ \phi_{j(p,q)}(x_1,x_2)  \Big\}_{p, q=1}^{5} \subset X(D).$$
To compute the zero-index transmission eigenvalues we proceed just as in the previous section. The spectral approximation of the eigenvalues $k_N$ satisfy the corresponding matrix eigenvalue problem with the appropriate mass and stiffness matrices. Again the matrix eigenvalue problem is solved by using `\texttt{polyeig}' command in \texttt{MATLAB}. 
\begin{table}[ht!]
\centering  
\begin{tabular}{| c | c | c | c | c |} 
\hline                  
     & $\eta=1$    & $\eta=10$  & $\eta=100$ & $\eta=1000$  \\ [0.5ex] 
\hline                  
$\, k_1\, $ & $2.42379135332$ & $2.23942914304$ &  $2.22322075768$ & $2.22161920572$ \\
\hline 
\end{tabular}
\caption{The first zero-index transmission eigenvalue for $n=4$ and $\eta=10^m$ with $m=0,1,2,3$ of the unit square. Here we see that $k_1$ is converging to the first `modified' Dirichlet eigenvalues for the unit square $\pi/\sqrt{2} \approx  2.2214414$.}\label{square1}
\end{table}

\begin{table}[ht!]
\centering  
\begin{tabular}{| c | c | c | c | c |} 
\hline                  
     & $\eta=1$    & $\eta=10$  & $\eta=100$ & $\eta=1000$  \\ [0.5ex] 
\hline                  
$\, k_1\, $ & $2.23128387981$ & $2.00960194537$ &  $1.96995925736$ & $1.96292112038$ \\
\hline 
\end{tabular}
\caption{The first zero-index transmission eigenvalue for $n= \left(\frac{x_1^2}{2}+2 \right) \hspace{-0.2cm}\left(\frac{x_2^2}{2}+2 \right)$ and $\eta=10^m$ with $m=0,1,2,3$ of the unit square.}\label{square2}
\end{table}

Here we see in Table \ref{square1} the convergence of the first zero-index transmission eigenvalue $k_1$ to the first  `modified' Dirichlet eigenvalue. Also, notice that in Tables \ref{square1} and \ref{square2} the monotonicity of the transmission eigenvalue $k_1(n,\eta)$ with respect to $n$ and $\eta$ is verified by the calculations. Now from the approximated transmission eigenvalue we can again estimate the refractive index. Using the convergence as $\eta \to \infty$ we proceed just as in the previous section. That is we find $n_{\text{approx}}$ such that 
$$k_1(n,\eta) = \tau_1(n_{\text{approx}})$$ 
where the conductivity parameter $\eta \gg 1$. To estimate the refractive index we can solve the above equation exactly since the `modified' Dirichlet eigenvalues are known analytically. In Tables \ref{square3} we present the approximation of two refractive indices from the first zero-index transmission eigenvalue for the unit square.

\begin{table}[ht!]
\centering  
\begin{tabular}{| c | c | c |} 
\hline                  
refractive index      &  1st eigenvalue  &  $n_{\text{approx}}$    \\ [0.5ex] 
\hline                  
$n=4  $ & $\, 2.23942914304 \, $&  $\, 3.935999\, $  \\
$n= \left(\frac{x_1^2}{2}+2 \right) \hspace{-0.2cm} \left(\frac{x_2^2}{2}+2 \right)$ & $ \, 2.00960194537 \,$ &  $\, 4.887757\, $ \\
\hline 
\end{tabular}
\caption{Estimation of the refractive index $n$ for $\eta = 10$. }\label{square3}
\end{table}

\section{Summary and Conclusions}
In conclusion, we have provided a numerical method to compute the zero-index transmission eigenvalues via the Dirichlet spectral-Galerkin approximation method. Our approximation space is taken to be the span of the first $N$ Dirichlet eigenfunctions. Even though our numerical examples are only presented in for the unit disk in $\R^2$ the analysis is also valid for any  domain {\color{black} where the boundary $\partial D$ is either polygonal with no reentrant corners or class $\mathscr{C}^2$} in $\R^d$ for $d=2$, 3. In order to apply this method one needs the Dirichlet eigenpairs a prior for the domain of interest. In theory this can be done by pre-calculating a fixed number of Dirichlet eigenpairs for the domain $D$ via BEM \cite{BEM-dirichlet} or FEM \cite{eig-FEM-book}. We have also given numerical examples to validate the theoretical results as well as investigated estimating the refractive index from the first zero-index transmission eigenvalue. It seems that for $\eta$ sufficiently large one can estimate the average value of $n$ which can be used for nondestructive testing. Possible future work would consist of applying this method to compute `classical' transmission eigenvalues with a conductive boundary and considering the inverse problem of recovering a variable coefficient refractive index from the eigenvalues.



\begin{thebibliography}{99}

\bibitem{physics}
\newblock M. Agranovich, B. Katsenelenbaum , A. Sivov and N. Voitovich
\newblock {\it ``Generalized Method of Eigenoscillations in Diffraction Theory ''}, 
\newblock  Wiley-VCH,  (1999).


\bibitem{spectraltev1}
\newblock J. An, 
\newblock A legendre-Galerkin spectral approximation and estimation of the index of refraction for transmission eigenvalues,
\newblock {\it Appl. Numer. Math.} {\bf 108} (2016), 1132--1143

\bibitem{spectraltev2}
\newblock J. An and J. Shen, 
\newblock Spectral approximation to a transmission eigenvalue problem and its applications to an inverse problem,
\newblock {\it Comp. $\&$ Math. with Appl.} {\bf 69(10)} (2015), 1132--1143

\bibitem{spectraltev3}
\newblock J. An and J. Shen, 
\newblock A spectral-element method for transmission eigenvalue problems,
\newblock {\it J. Sci. Comput.} {\bf 57} (2013), 670--688

\bibitem{weyl-law}
\newblock W. Arendt W., R. Nittka R., W. Peter and F. Steiner.
\newblock Weyl's Law: Spectral properties of the Laplacian in mathematics and physics, 
\newblock {\it Mathematical Analysis of Evolution, Information, and Complexity} (2009) 1--71.

\bibitem{numerics-book}
\newblock K. Atkinson and W. Han, 
\newblock {\it ``Theoretical numerical analysis: a functional analysis framework''}
\newblock Springer, New York, 3$^{rd}$ edition, (2009).

\bibitem{zi-te}
\newblock L. Audibert, L. Chesnel, and H. Haddar, 
\newblock Transmission eigenvalues with artificial background for explicit material index identification
\newblock {\it C. R. Acad. Sci. Paris, Ser. I}  {\bf 356(6)} (2018), 626--631

\bibitem{babuska-osborn} 
\newblock I. Babuska and J.E. Osborn,
\newblock {Eigenvalue problems}, 
\newblock {\it Handbook of Numerical Analysis} {\bf 2} Elseveier Science Publishers Holland, (1991) 641--787.

\bibitem{te-cbc}
\newblock O. Bondarenko, I. Harris, and A. Kleefeld, 
\newblock The interior transmission eigenvalue problem for an inhomogeneous media with a conductive boundary, 
\newblock {\it Applicable Analysis} {\bf 96(1)}, (2017), 2--22.

\bibitem{fem-4th-order}
\newblock S. C. Brenner, P. Monk, and J. Sun, 
\newblock $C^0$ interior penalty Galerkin method for biharmonic eigenvalue problems, 
\newblock {\it Spectral and High Order Methods for Partial Differential Equations} (2014), Lecture Notes in Computational Science and Engineering, vol. 106, Springer, New York, (2015), 3--15.

\bibitem{LSM-maxwell-book}
\newblock F. Cakoni, D. Colton, and P. Monk,
\newblock {\it ``The linear sampling method in inverse electromagnetic scattering''}, 
\newblock CBMS Series, SIAM 80, Philadelphia, (2011).

\bibitem{isp-n1}
\newblock F. Cakoni, D. Colton, P. Monk, and J. Sun,
\newblock The inverse electromagnetic scattering problem for anisotropic media
\newblock {\it Inverse Problems} {\bf 26} (2010) 074004.

\bibitem{TE-book}
\newblock F. Cakoni,  D. Colton, and H. Haddar 
\newblock {\it ``Inverse scattering theory and transmission eigenvalues''}, 
\newblock CBMS Series, SIAM 88, Philadelphia, (2016).

\bibitem{cchlsm}
\newblock F. Cakoni, D. Colton, and H. Haddar,
\newblock On the determination of Dirichlet or transmission eigenvalues from far field data,
\newblock {\em C. R. Acad. Sci. Paris}, 348:379--383, (2010).

\bibitem{te-homog}
\newblock F. Cakoni, H. Haddar, and I. Harris, 
\newblock Homogenization of the transmission eigenvalue problem for periodic media and application to the inverse problem. 
\newblock {\it Inverse Problems and Imaging}, {\bf 9(4)} (2015), 1025--1049.

\bibitem{cgtev}
\newblock F. Cakoni and D. Gintides, 
\newblock New results on transmission eigenvalues,
\newblock {\it Inverse Problems and Imaging}, {\bf 4} (2010), 39--48.

\bibitem{fem-te}
\newblock F. Cakoni, P. Monk and J. Sun, 
\newblock Error analysis of the finite element approximation of transmission eigenvalues, 
\newblock {\it Comput. Methods Appl. Math.} {\bf 14} (2014) 419--427. 

\bibitem{evans}
\newblock L. Evans,
\newblock \emph{``Partial Differential Equations''}, 
\newblock 2$^{nd}$ edition, AMS, Providence 2010.

\bibitem{fem-te2}
\newblock H. Geng, X. Ji, J. Sun and L. Xu,
\newblock $C^0$IP methods for the transmission eigenvalue problem
\newblock {\it J Sci Comput.} {\bf 68}  (2016) 326--338

\bibitem{gpinvtev}
\newblock D. Gintides and N. Pallikarakis,
\newblock A computational method for the inverse transmission eigenvalue problem,
\newblock {\it Inverse Problems} {\bf 29} (2013), 104010.

\bibitem{harris-thesis}
\newblock I. Harris, 
\newblock {\it ``Non-destructive testing of anisotropic materials''}, 
\newblock Ph.D. Thesis, University of Delaware. (2015)

\bibitem{two-eig-cbc}
\newblock I. Harris, 
\newblock Analysis of two transmission eigenvalue problems with a coated boundary condition.
\newblock {\it  Applicable Analysis} {\bf DOI:  10.1080/00036811.2019.1672869} (2019).

\bibitem{te-cbc2}
\newblock I. Harris and A. Kleefeld, 
\newblock The inverse scattering problem for a conductive boundary condition and transmission eigenvalues, 
\newblock {\it Applicable Analysis}  {\bf 96(3)}, (2020), 508--529.

\bibitem{armin}
\newblock A. Kirsch and A. Lechleiter,
\newblock The inside-outside duality for scattering problems by inhomogeneous media,
\newblock {\it Inverse Problems} {\bf 29} (2013), 104011.

\bibitem{mfs-te}
\newblock A. Kleefeld and L. Pieronek,
\newblock The method of fundamental solutions for computing acoustic interior transmission eigenvalues,
\newblock {\it Inverse Problems} {\bf 34} (2018), 035007.


\bibitem{frac-heat}
\newblock Y. Liu,
\newblock Strong maximum principle for multi-term time-fractional diffusion equations and its application to an inverse source problem,
\newblock {\it Comput. $\&$ Math. with Appl.} {\bf 71(1)} (2017) 96--108

\bibitem{osborn} 
\newblock J. Osborn,
\newblock Spectral approximation for compact operators, 
\newblock {\it Math. Comput.} {\bf 29 }(1975), 712--725.

\bibitem{BEM-dirichlet} 
\newblock O. Steinbach and G. Unger,
\newblock Convergence analysis of a Galerkin boundary element method for the Dirichlet Laplacian eigenvalue problem
\newblock {\it SIAM J. Numer. Anal.} 50(2), 710--728.

\bibitem{isp-n2}
\newblock J. Sun,
\newblock Estimation of transmission eigenvalues and the index of refraction from Cauchy data
\newblock {\it Inverse Problems} {\bf 27} (2011) 015009.

\bibitem{eig-FEM-book}
\newblock J. Sun, A. Zhou,
\newblock {\it ``Finite element methods for eigenvalue problems''}, 
\newblock Chapman and Hall/CRC Publications, Boca Raton, 1st Edition, (2016).

\end{thebibliography}
\end{document}